\numberwithin{equation}{section}
\newcommand{\ds}{\displaystyle}
\newcommand{\dd}{\mathrm{d}}
\newtheorem{Theorem}{Theorem}[section]
\newtheorem{Proposition}{Proposition}[section]
\theoremstyle{definition}           %% \theoremstyle{plain}, or \theoremstyle{remark}
\newtheorem{defn}{Definition}[section]
\theoremstyle{remark}
\newcolumntype{L}[1]{>{\raggedright\let\newline\\\arraybackslash\hspace{0pt}}m{#1}}
\newcolumntype{C}[1]{>{\centering\let\newline\\\arraybackslash\hspace{0pt}}m{#1}}
\newcolumntype{R}[1]{>{\raggedleft\let\newline\\\arraybackslash\hspace{0pt}}m{#1}}
\begin{document}
	\title[A fully nonlinear coagulation-fragmentation model]{Numerical schemes for a fully nonlinear coagulation-fragmentation model coming from wave kinetic theory}
	
	\author[Arijit Das]{Arijit Das}
	
	\address[Arijit Das]{Department of Mathematics \\
		Chair for Dynamics, Control, Machine Learning and Numerics (Alexander von Humboldt Professorship), Friedrich-Alexander University Erlangen-Nuremberg, Erlangen, Germany.}
	
	\email[Arijit Das]{\href{mailto:arijit.das@fau.de}{arijit.das@fau.de}}
	
	\author[Minh-Binh Tran]{Minh-Binh Tran}
	\address[Minh-Binh Tran]{Department of Mathematics \\
		Texas A\&M University, College Station, TX, 77843 USA}
	
	\email[Minh-Binh Tran]{\href{mailto:minhbinh@tamu.edu}{minhbinh@tamu.edu}}
	
	%	\author[Enrique Zuazua]{Enrique Zuazua}
	%	\address[Enrique Zuazua]{Department of Mathematics \\
		%		Chair for Dynamics, Control, Machine Learning and Numerics (Alexander von Humboldt Professorship), Friedrich-Alexander University Erlangen-Nuremberg, Erlangen, Germany.}
	%	
	%	\email[Enrique Zuazua]{\href{mailto:enrique.zuazua@fau.de}{enrique.zuazua@fau.de}}
	%	
	\keywords{Wave turbulence; $3-$wave kinetic; Coagulation-fragmentation equation; Finite volume method; energy decay, Numerical analysis} 
	\subjclass[2020]{65M08, 45K05, 76F55}

	\begin{abstract}
		This article introduces a novel numerical approach, based on  Finite Volume Techniques, for studying fully nonlinear coagulation-fragmentation models, where both the coagulation and fragmentation components of the collision operator are nonlinear. The models come from  $3-$wave kinetic equations, a pivotal framework in wave turbulence theory. Despite the importance of wave turbulence theory in physics and mechanics, there have been very few numerical schemes for  $3-$wave kinetic equations, in which no  additional assumptions are manually imposed on the evolution of the solutions, and the current manuscript provides one of the first of such schemes. To the best of our knowledge, this also is the first numerical scheme capable of accurately capturing the long-term asymptotic behavior of solutions to a fully nonlinear coagulation-fragmentation model. The scheme is implemented on some test problems, demonstrating strong alignment with theoretical predictions of energy cascade rates, rigorously obtained in the work \cite{soffer2020energy}. We further introduce a weighted Finite Volume variant to ensure energy conservation across varying degrees of kernel homogeneity. Convergence and first-order consistency are established through theoretical analysis and verified by experimental convergence orders in test cases.
	\end{abstract}
	\maketitle
	%%%%%%%%%%%%%%%%%%%%%%%%%%%%%%%%%%%%%%%%%%%%%%%%%%%%%%%%%%%%%%%%%%%%%%%%%%%%%%%%%%%%%%%%%%%%%%%%%%%%
	
\section{Introduction}\label{Sec_1}
Over the past six decades, the theory of  wave turbulence has been shown to play  important roles in a vast range of physical and mechanical examples including inertial waves due to rotation, Alfv\'en wave turbulence in the solar wind, waves in plasmas of fusion devices, and many others. Building on Peierls' foundational work \cite{peierls1929kinetischen}, the theory's modern development has been driven by contributions from Benney and Saffman \cite{benney1966nonlinear}, Zakharov and {Filonenko} \cite{zakharov1967weak},  Benney and Newell \cite{benney1969random} and especially the work of Hasselmann \cite{hasselmann1962non,hasselmann1974spectral}. These efforts culminated in the formulation of $3-$wave and $4-$wave kinetic equations, which describe the energy distribution among weakly interacting waves. The general form of 3-wave kinetic equations reads
\begin{equation}\label{WT1}
	\begin{aligned}
		\partial_tf(t,p) =&   \iint_{\mathbb{R}^{2d}} \Big[ R_{p,p_1,p_2}[f] - R_{p_1,p,p_2}[f] - R_{p_2,p,p_1}[f] \Big] \dd p_1\dd p_2,  \ 
		f(0,p) \ =  \ f_0(p),
	\end{aligned}
\end{equation}
where  $f(t,p)$ is the  wave density  at  wavenumber $p\in \mathbb{R}^d$, $d \ge 2$ and $f_0(p)$ is the initial condition.  Moreover,
\begin{equation}\label{WT2}	R_{p,p_1,p_2} [f]:=  |V_{p,p_1,p_2}|^2\delta(p-p_1-p_2)\delta(\omega -\omega_{1}-\omega_{2})(f_1f_2-ff_1-ff_2) , \end{equation}
with the short-hand notations $f = f(t,p)$, $\omega = \omega(p)$ and $f_j = f(t,p_j),$ $\omega_j = \omega(p_j)$, for  $p$, $p_j$, $j\in\{1,2\}$. The quantity $\omega(p)$ is the dispersion relation of the waves. For a deeper physical understanding, we refer to the comprehensive works in \cite{nazarenko2011wave, pomeau2019statistical, zakharov2012kolmogorov}.

In a related context,  coagulation and fragmentation kinetic equations arise in several physical  and mechanical contexts: the formation of aerosols,  polymers, celestial bodies on astronomical scales or colloidal aggregates   (see \cite{banasiak2019analytic,canizo2010regularity,cristian2023long,cristian2024fast,degond2017coagulation,escobedo2003gelation,liu2019high,jang2024discrete,menon2008scaling,perthame2005exponential,stewart1989global,tran2022coagulation} and the references therein).  Coagulation-fragmentation kinetic equations describe the behavior of particles that coagulate or fragment due to the mutual interactions or some external force. The primary quantity of interest in this field is the particle number density function, typically denoted $n_\omega(t)$, representing the density of clusters of size  $\omega$ at time $t$. For pure coagulation, the Smoluchowski coagulation equation serves as the classical model of particulate processes.   Despite the difference in focus, there is a conceptual analogy between coagulation-fragmentation kinetics and $3-$wave kinetic models: the transfer of energy between scales in wave turbulence is analogous to the transfer of mass between clusters in coagulation processes. This observation offers a quantitative bridge between the two fields. Building on this analogy, Connaughton and Newell proposed an approximation of the $3-$wave kinetic equations \eqref{WT1}-\eqref{WT2} using a nonlinear coagulation-fragmentation model \cite{connaughton2010dynamical}. In contrast to standard models, Connaughton-Newell’s formulation incorporates nonlinearities in both the coagulation and fragmentation terms of the collision operator.  Connaughton-Newell's model is given in \cite{connaughton2010dynamical}

\begin{align}\label{1.1}
	\begin{split}
		&\frac{\partial N_{\omega}}{\partial t} = \mathcal{Q}\left[N_\omega\right](t), \quad \omega \in \mathbb{R}_+, \quad N_\omega(0)= N_\omega^{in}.
	\end{split}
\end{align}
The operator $\mathcal{Q}\left[N_\omega\right](t)$ can  be expressed as
\begin{align}\label{eq_1}
	\mathcal{Q}\left[N_\omega\right](t) = S_1\left[N_{\omega}\right] - S_2\left[N_{\omega}\right] - S_3\left[N_{\omega}\right],  
\end{align}
and
\begin{align*}
	\begin{split}
		&S_1\left[N_{\omega}\right] = \int_{0}^{\omega}K_1(\omega-\mu,\mu)N_{\omega-\mu}N_{\mu}\dd \mu - \int_{\omega}^\infty K_1(\mu-\omega,\omega)N_{\mu-\omega}N_{\omega}\dd \mu  - \int_{0}^{\infty} K_1(\omega,\mu) N_\omega N_{\mu}\dd \mu \\
		& \hspace{1.2cm}= \int_{0}^{\omega}K_1(\omega-\mu,\mu)N_{\omega-\mu}N_{\mu}\dd \mu - 2 \int_{0}^{\infty} K_1(\omega,\mu) N_\omega N_{\mu}\dd \mu,\\
		&S_2\left[N_{\omega}\right] = - \int_{0}^{\omega} K_2(\mu,\omega-\mu)N_\omega N_{\mu}\dd \mu + \int_{\omega}^{\infty} K_2(\omega,\mu-\omega)N_{\mu-\omega}N_{\mu}\dd \mu + \int_{0}^{\infty} K_2(\omega,\mu) N_\omega N_{\omega+\mu}\dd \mu,\\
		&  S_3\left[N_{\omega}\right] =
		-\int_{0}^{\omega}K_3(\mu,\omega-\mu)N_\omega N_{\omega-\mu}\dd \mu + \int_{\omega}^{\infty} K_3(\omega,\mu-\omega)N_{\omega}N_{\mu}\dd \mu + \int_{0}^{\infty}K_3(\omega,\mu) N_{\mu} N_{\omega+\mu}\dd \mu.
	\end{split}
\end{align*}
The wave frequency spectrum $N_\omega$ is defined such that $\ds \int_{\omega_{1}}^{\omega_{2}}N_\omega\dd\omega$ represents the total wave action in the frequency band $\ds \left[\omega_{1},\omega_{2}\right]$. To analyze solutions to the nonlinear coagulation-fragmentation model \eqref{1.1}, we define the $p-$th moment as:
\begin{align*}
	M_p(t) = \int_{0}^{\infty} \omega^p N_\omega(t)\dd \omega,\quad\text{for all}\quad p\ge 0.
\end{align*}
In this regard, the total number of wave $N$ and total wave energy $E$ can be obtained from the zeroth and first moments, respectively. Specifically, we have:
\begin{align*}
	N= \int_{0}^{\infty} N_\omega \dd \omega, \qquad E= \int_{0}^{\infty} \omega N_\omega \dd \omega.
\end{align*}
Also, the nonnegative homogeneous functions $K_i(\omega,\mu)$ $(i=1,2,3)$ represent the wave interaction kernels. Among these kernels, $K_1$ facilitates the forward transfer of energy, while $K_2$ and $K_3$ are associated with the back-scattering of energy.   

\subsection{Summary of previous numerical schemes for \eqref{1.1}}\label{Sub:IntroNumerical}

Since developing numerical schemes for 3-wave kinetic equations is highly important to understand the time evolution of the solutions, in \cite{connaughton2010dynamical},  the authors constructed  numerical schemes for \eqref{1.1}, where the kernels are chosen as
\begin{align}\label{1.2}
	K_1\left(\omega,\mu\right)=K_2\left(\omega,\mu\right)=K_3\left(\omega,\mu\right)=\left(\omega\mu\right)^{\frac{\lambda}{2}}.
\end{align} 
In the weak turbulence theory, one of the most important results  \cite{zakharov1967weak,zakharov2012kolmogorov} is  the existence of the so-called  Kolmogorov-Zakharov spectra, which is a class of { time-independent solutions}  of equation \eqref{1.1}:  $$N_\omega \approx C\omega^{-\kappa}, \ \ \ \kappa>0.$$

The goal of \cite{connaughton2010dynamical} is to show that $N_\omega$ numerically evolves to the Kolmogorov-Zakharov spectra.
To this end, the solution  $N_\omega$  is then hypothesized to evolve toward a scaling (self similar) form $
N_\omega \sim S(t)^a F\left(\frac{\omega}{S(t)}\right),
$ {as $S(t) \longrightarrow 0$ and $\omega\longrightarrow \infty$
	and $\sim$ denote the scaling limit}. For $x=\omega/S(t)$ the total energy corresponding to the self similar profile $F$ is given by
\begin{align}\label{1.4}
	\int_{0}^{\infty} \omega{ S(t)^a}  F\left(\frac{\omega}{S(t)}\right)\dd \omega=\int_{0}^{\infty} x S(t)^{a+2} F(x)\dd x = S(t)^{a+2} \int_{0}^{\infty}x F(x) \dd x \sim \mathcal{O} \left(S(t)^{a+2}\right),
\end{align} 
which grows with the rate $\ds \mathcal{O} \left(S(t)^{a+2}\right)$. Now plugging this ansatz into the equation \eqref{1.1} with the choice \eqref{1.2}, one can obtain the system
\begin{align}\label{1.5}
	\begin{split}
		&\frac{\dd S(t)}{\dd t} = S(t)^\xi, \quad \text{with}\quad \xi= \lambda+a+2,\ 
		aF(x) + x\dot{F}(x) = \mathcal{Q}\left[F\right](x).
	\end{split}
\end{align} 
The authors of \cite{connaughton2010dynamical} assumed that  the self-similar profile exhibits a power-law behavior, i.e. $\ds F(x) \sim Ax^{-m}$ as $x\sim 0$ then $m=\lambda+1$.  In   \cite{connaughton2010dynamical}, the authors consider  $\lambda \in \left[0,2\right]$ under another assumption that the total energy of the system is  not  conserved in time but grows linearly with time
$
\int_{0}^{\infty} \omega N_\omega \dd \omega =Jt
$. 
This leads to the ODE $\ds \frac{\dd S}{\dd t}= \frac{J}{\left(a+2\right)\int_{0}^{\infty}xF(x)\dd x} S(t)^{-a-1}$. By comparing this with the equation \eqref{1.5}, one obtains $\ds a= -\frac{\lambda+3}{2}$, the Kolmogorov--Zakharov-value. However, in this case the self-similar profile $F\sim x^{-\frac{\lambda+3}{2}}$ makes the integral 	\begin{align}\label{1.6}
	\int_{0}^{\infty}xF(x)\dd x,
\end{align} divergent. Consequently, further assumptions need to be imposed on the evolution of the solution $N_\omega$ itself to make sure that \eqref{1.6} converge. {\it As a conclusion, the numerical results of \cite{connaughton2010dynamical} lead   to the establishment of the Kolmogorov-Zakharov (KZ) spectra, under several assumptions manually imposed on the evolution of the solution. } 

\subsection{Summary of previous theoretical findings for \eqref{1.1}: A new phenomenon discovered in \cite{soffer2020energy}}\label{Sub:IntroTheore}

{ As discussed in Subsection \ref{Sub:IntroNumerical}, several assumptions must be manually imposed on the evolution of the solution \( N_\omega \) in order to observe the KZ-spectra. A natural question arises: \textit{Can the KZ-spectra be observed without imposing such assumptions on the evolution of \( N_\omega \)?} This question was addressed in the theoretical work \cite{soffer2020energy}, where the authors rigorously proved that, in the absence of these imposed assumptions, the solutions display a completely different behavior, which we summarize below.
} They  demonstrated that the solution to the  fully nonlinear coagulation-fragmentation model exhibits the property that the energy cascades from small wave numbers to large wave numbers. More precisely, they proved that the energy on the interval $\left[0,\infty\right)$ is a non-increasing function in time. That is for all $T_1>0$ we can always find a larger time $T_2>T_1$ such that
\begin{align}\label{1.8}
	\int_{0}^{\infty} \omega N_\omega(T_2)\dd \omega < \int_{0}^{\infty} \omega N_\omega(T_1)\dd \omega.
\end{align}
They also decomposed the energy of a solution $g_\omega(t) = \omega N_\omega(t)$ at any time $t$ as follows
\begin{align}\label{1.10}
	g_\omega(t) = \bar{g}_\omega(t) + \tilde{g}(t) \delta_{\{\omega=\infty\}}.
\end{align}
Where the nonnegative function $\bar{g}$ is the regular part and $\tilde{g}$ is the singular part which is a measure. They are both characterized by the properties $\bar{g}_\omega(0) = g_\omega(0)$ and $\tilde{g}(0)=0$. This indicates that initially, the energy is concentrated in the regular part, and as time progresses, the energy gradually accumulates at $\{\omega=\infty\}$. In this context, there exists a positive time $t_1^\ast$ known as \emph{first blow-up time} for which $\tilde{g}(t) >0$ for all $t>t_1^\ast$. Moreover, after the first blow-up time there exist \emph{infinitely many blow-up times} represented by the sequence $\ds 0< t_1^\ast<t_2^\ast<\cdot\cdot\cdot<t_n^\ast<\cdot\cdot\cdot,$ satisfying $\ds \bar{g}_\omega(t_1^\ast)> \bar{g}_\omega(t^\ast_2)>\cdot\cdot\cdot> \bar{g}_\omega(t_n^\ast)>\cdot\cdot\cdot  ~\text{and}~~	\tilde{g}(t_1^\ast)< \tilde{g}(t_2^\ast)<\cdot\cdot\cdot<\tilde{g}(t_n^\ast)<\cdot\cdot\cdot$.
If we consider $\chi_{[0,R]}(\omega)$ to be the 
{cut-off function} of $\omega$ in the finite domain $[0,R]$, the equivalent form of the multiple blow-up time phenomena  can be represented for any arbitrary truncation parameter $R$, 
\begin{align}\label{1.15}
	\int_{0}^{R}\omega N_\omega(t) \dd \omega = \int_{\mathbb{R}_+}\chi_{[0,R]}(\omega) \omega N_\omega \dd \omega \le \mathcal{O}\left(\frac{1}{\sqrt{t}}\right), \quad\text{as}\quad t \longrightarrow\infty.
\end{align}

We also note that theoretical studies of $3-$wave kinetic equations have been done in diverse contexts, such as phonon interactions in an-harmonic crystal lattices \cite{CraciunBinh,EscobedoBinh,GambaSmithBinh,tran2020reaction}, capillary waves \cite{nguyen2017quantum,walton2022deep}, beam waves \cite{rumpf2021wave}, stratified ocean flows \cite{GambaSmithBinh}, and Bose-Einstein Condensates \cite{cortes2020system,EPV,escobedo2023linearized1,escobedo2023linearized,ToanBinh,nguyen2017quantum,soffer2018dynamics}. 
\subsection{Novelty of our work: Construction of numerical schemes to numerically confirm the established theoretical findings of  \cite{soffer2020energy} discussed in Subsection \ref{Sub:IntroTheore}}\label{Sub:IntroNovel}

{The primary goal of this article is to develop a numerical scheme for the fully nonlinear coagulation-fragmentation model \eqref{1.1} that, unlike the approaches in \cite{connaughton2010dynamical} (discussed in Subsection \ref{Sub:IntroNumerical}), does not impose additional assumptions on the evolution of solutions. Because no such assumptions are manually introduced, we expect the numerical solutions to reflect the behavior rigorously established in \cite{soffer2020energy}, as outlined in Subsection \ref{Sub:IntroTheore}.} This scheme is designed for several choices of the kernels:
$$K_1(\omega,\mu)= (\omega\mu)^\theta,\ \ \ K_2(\omega,\mu)= (\omega\mu)^\gamma, \ \ \ K_3(\omega,\mu) = (\omega\mu)^\delta.$$
{\it In our numerical tests, the parameters are chosen such that $0<\theta, \gamma,\delta\le 1$, which are precisely the case considered in the  theoretical work \cite{soffer2020energy}.}

Our finite volume scheme allows for the numerical observation of the multiple blow-up time phenomenon, $\ds t_1^\ast<t_2^\ast<\cdot\cdot\cdot<t_n^\ast<\cdot\cdot\cdot$, as well as the verification of the energy decay rate estimate given by \eqref{1.15} for the fully nonlinear coagulation-fragmentation model \eqref{1.1}. 

{It is worth noting that in \cite{walton2023numerical} and \cite{walton2024numerical}, the authors proposed numerical schemes for a { specific $3-$wave kinetic equation}, derived from the theory of capillary and acoustic waves. In these works, the authors formulated a conservative representation of the kinetic model and proposed   numerical schemes demonstrating good agreement with theoretical decay phenomena. However, these schemes relied on the { exact formulation \eqref{WT1}-\eqref{WT2}} of a {  specific $3-$wave kinetic equation}, rather than the { approximation} of { all 3-wave kinetic equations} by a fully nonlinear coagulation-fragmentation model. Since Connaughton-Newell’s model represents one of the first fully nonlinear models in the literature, it is important to numerically investigate the behavior of its solutions. }

From this discussion, it is clear that developing a numerical scheme for the fully nonlinear coagulation-fragmentation model \eqref{1.1} is a pressing and unexplored task in the literature. At the same time the numerical observation of the theoretical results for the time dependent solution also becomes necessary for both scientific understanding and applications. {\it Therefore, the primary objective of this article is to develop a novel numerical scheme for the fully nonlinear coagulation-fragmentation model \eqref{1.1} to numerically verify the theoretical findings rigorously proved in \cite{soffer2020energy}.}  More specifically, the primary focus of this study is on the derivation of a finite volume scheme (FVS) that allows us to investigate the behavior of the solutions to \eqref{1.1}. To ensure energy conservation, particularly for  kernels with relatively small degrees of homogeneity $\theta, \gamma,\delta$, we propose an alternative weighted finite volume scheme (FVS) that incorporates specific weight functions. In addition to formulating these numerical schemes, this article provides a comprehensive analysis of the convergence of the proposed scheme. We further demonstrate that the scheme is first-order consistent, which is verified through the experimental order of convergence calculations across various test cases. {In our numerical tests, we expect energy to cascade when the degrees of homogeneity of the kinetic kernels, $\theta$, $\gamma$, and $\delta$, are large, specifically, when $\theta, \gamma, \delta \ge \frac{1}{2}$, and all coefficients $K_1$, $K_2$, and $K_3$ are nonzero.
}

\subsection{Other numerical works for coagulation-fragmentation equations }\label{Sub:IntroCoFrag}

Even though there have been quite a lot of numerical works for coagulation-fragmentation models, in most of those works, either the  collision operators only contain coagulation components   or the collision operators only contain both coagulation and fragmentation components but the fragmentation parts are linear (see, for instance \cite{degond2016numerical,filbet2004numerical}). 
Recently, a few numerical works have also been done for nonlinear   coagulation-fragmentation models (see \cite{das2024approximate,das2024improved, das2023development}). { However, the mechanisms of the models considered in those works involve collision operators that represent only a subset of our full collision operator \eqref{eq_1}. As a result, the dynamics of those models do not capture the complete scope of wave turbulence kinetics. For example, the models presented in \cite{das2024improved, das2023development} describe a pure fragmentation phenomenon, which is given by
}
\begin{align*}
	\frac{\partial f(\omega,t)}{\partial t} = \int_{0}^{\infty} \int_{\omega}^{\infty} K(\mu, \nu) B(\omega,\mu;\nu) f(\mu,t) f(\nu,t)\dd \mu \dd \nu - f(\omega,t)\int_{0}^{\infty} k(\omega,\mu) f(\mu,t) \dd \mu,
\end{align*}
for explicit kernels $B(\omega,\mu;\nu),k(\omega,\mu),k(\omega,\mu)$, which comprises of a small part of  the $9$ components displayed in \eqref{eq_1}. Those models are therefore much easier to numerically investigate. 

Therefore, the current manuscript is the {\it first work that conducts numerical studies for a fully nonlinear coagulation-fragmentation model.}

\subsection{Other numerical schemes for wave kinetic equations}\label{Sub:IntroCoFrag}	
Let us discuss  the works \cite{bell2017self,semisalov2021numerical} where numerical methods have been employed to solve various types of wave turbulence kinetic equations. In \cite{bell2017self}, the author computed the self-similar profile of the Alfven wave turbulence kinetic equation, illustrating different finite capacities. The work by \cite{semisalov2021numerical} presents a numerical technique based on Chebyshev approximation to solve the self-similar profile before the first blow-up time  for a $4-$wave kinetic equation. { The loss of energy phenomenon rigorously proved in \cite{soffer2020energy} has not been studied in these works. }

%In addition,  $4-$wave kinetic equations have been analyzed in the pioneering work of Escobedo and Velazquez \cite{EscobedoVelazquez:2015:FTB,EscobedoVelazquez:2015:OTT}, as well as other works  \cite{germain2017optimal,collot2024stability,menegaki20222,escobedo2024instability,germain2023local,dolce2024convergence,staffilani2024condensation,staffilani2024energy}.  Rigorous derivations of wave kinetic equations are provided in \cite{buckmaster2019onthe,buckmaster2019onset,collot2020derivation,collot2019derivation,deng2019derivation,deng2021propagation,deng2023long,deng2022wave,dymov2019formal,dymov2019formal2,dymov2020zakharov,dymov2021large,germain2024universality,grande2024rigorous,hani2023inhomogeneous,hannani2022wave,LukkarinenSpohn:WNS:2011,ma2022almost,staffilani2021wave} and the references therein.

\subsection{Plan of the paper}

The remainder of this article is organized as follows: In Section \ref{Sec_2}, we formulate the finite volume scheme for the fully nonlinear coagulation-fragmentation model and introduce the newly formulated weighted FVS, proving its ability to conserve total energy. Section \ref{Sec_3} is dedicated to a detailed convergence analysis of the proposed FVS. In Section \ref{Sec_4}, we present numerical simulations to validate the theoretical results for the fully nonlinear coagulation-fragmentation model.

\section{Formulation of finite volume scheme}\label{Sec_2}

For the mathematical formulation of a numerical scheme, we need to truncate the given equation in a finite computational domain $ \mathcal{D}= \left(0,R\right]$. Let us consider the following truncated form of the  fully nonlinear coagulation-fragmentation model equation
\begin{align}\label{eq_3}
	\frac{\partial N_{\omega}}{\partial t} = S^{nc}_1\left[N_{\omega}\right] - S^{c}_2\left[N_{\omega}\right] - S^{c}_3\left[N_{\omega}\right], \qquad \omega \in \mathbb{R}_+.
\end{align} 
Here
\begin{align*}
	&S^{nc}_1\left[N_{\omega}\right] = \int_{0}^{\omega}K_1(\omega-\mu,\mu)N_{\omega-\mu}N_{\mu}\dd \mu - 2 \int_{0}^{R} K_1(\omega,\mu) N_\omega N_{\mu}\dd \mu,\\
	& S^{c}_2\left[N_{\omega}\right] = - \int_{0}^{\omega} K_2(\mu,\omega-\mu)N_\omega N_{\mu}\dd \mu + \int_{\omega}^{R} K_2(\omega,\mu-\omega)N_{\mu-\omega}N_{\mu}\dd \mu+ \int_{0}^{R-\omega} K_2(\omega,\mu) N_\omega N_{\omega+\mu}\dd \mu,\\
	& S^{c}_3\left[N_{\omega}\right] = -\int_{0}^{\omega}K_3(\mu,\omega-\mu)N_\omega N_{\omega-\mu}\dd \mu + \int_{\omega}^{R} K_3(\omega,\mu-\omega)N_{\omega}N_{\mu}\dd \mu + \int_{0}^{R-\omega}K_3(\omega,\mu) N_{\mu} N_{\mu+\omega}\dd \mu.
\end{align*}

{It is worth noting that taking the limit \( R \to \infty \) in the above truncation recovers the original equation \eqref{1.1}, provided the wave kernels are chosen appropriately. We refer the reader to the work of Stewart \cite{stewart1989global}, where such integral convergence is established in the context of a coagulation-fragmentation model.
}

{We discretize the computational domain \(\mathcal{D}\) into \(I\) cells over the interval \([0, R)\), where \(R < \infty\).
} Moreover, each of the $i-$th sub interval for $i\in \{1,2,...,I\}$, is denoted by $\Lambda_i:= \left[\omega_{i-1/2},\omega_{i+1/2}\right]$ and the cell representative of the $i-$th cell is given by
\begin{equation*}
	\omega_{1/2}=0, \quad \omega_{I^h+1/2}=R, \quad \omega_i = \frac{\omega_{i-1/2}+\omega_{i+1/2}}{2}.
\end{equation*}
Furthermore, introduce the bound $\Delta\omega$ and $\Delta\omega_{min}$ as follows  
$
\Delta\omega_{min}\le \omega_{i+1/2}-\omega_{i-1/2}:=\Delta \omega_i \le \Delta\omega.
$
The integration of the truncated  fully nonlinear coagulation-fragmentation model over each cell gives the discretized scheme in $\mathbb{R}^I$ as
\begin{equation}\label{eq_5}
	\frac{\dd \mathbf{N}}{\dd t}= \mathbf{J}\left(\mathbf{N}\right) := \sum_{k=1}^{5} \mathbf{Q}^k,\quad\text{with}\quad \mathbf{N}(0) = \mathbf{N}^{in}.
\end{equation}
Where $\ds \mathbf{N}, \mathbf{N}^{in},$ and $\mathbf{Q}^k$ for $(k=1,2...,5)$ are all in $\mathbb{R}^I$. The $i-$th component of these vectors are given by
\begin{align}
	&N_i(t) = \int_{\omega_{i-1/2}}^{\omega_{i+1/2}} N_\omega(t)\dd \omega,\quad \text{with}\quad N_i^{in}(t) =  \int_{\omega_{i-1/2}}^{\omega_{i+1/2}} N_\omega(0)\dd \omega,\label{eq_5.1}\\
	& Q^1_i(t) = \int_{\omega_{i-1/2}}^{\omega_{i+1/2}}\int_{0}^{\omega}K_1(\omega-\mu,\mu)N_{\omega-\mu}N_{\mu}\dd \mu \dd \omega,\label{eq_5.2} \\
	& Q^2_i(t) =-2 \int_{\omega_{i-1/2}}^{\omega_{i+1/2}}\int_{0}^{R} K_1(\omega,\mu) N_\omega N_{\mu}\dd \mu, \label{eq_5.3}\\
	& Q^3_i(t) =\int_{\omega_{i-1/2}}^{\omega_{i+1/2}}\left[\int_{\omega}^{R} K_2(\omega,\mu-\omega)N_{\mu-\omega}N_{\mu}\dd \mu+\int_{0}^{R-\omega}K_3(\omega,\mu) N_{\mu} N_{\mu+\omega}\dd \mu\right] \dd \omega, \label{eq_5.4}\\
	& Q^4_i(t) =-\int_{\omega_{i-1/2}}^{\omega_{i+1/2}}\left[\int_{0}^{\omega} K_2(\mu,\omega-\mu)N_\omega N_{\mu}\dd \mu+\int_{0}^{\omega}K_3(\mu,\omega-\mu)N_\omega N_{\omega-\mu}\dd \mu\right] \dd \omega,\label{eq_5.5}\\
	&Q^5_i(t) =\int_{\omega_{i-1/2}}^{\omega_{i+1/2}}\left[\int_{0}^{R-\omega} K_2(\omega,\mu) N_\omega N_{\omega+\mu}\dd \mu+\int_{\omega}^{R} K_3(\omega,\mu-\omega)N_{\omega}N_{\mu}\dd \mu\right]  \dd \omega. \label{eq_5.6}
\end{align}

Moreover, for a fully discrete formulation, the time domain need to discretized. In this regard, we split the time interval $[0,T]$ into $N$ subintervals $
\tau_n:=[t_n,t_{n+1}),$  $\text{for} \quad n \in \{0,1,...,N-1\},
$ with $t_n=n\Delta t$ and $N\Delta t=T$.
The above discretization of frequency variable $\omega$ and time variable $t$ leads to the discretize form of the collision kernels as follows; for $i,j \in \{1,...,I\}$ 
\begin{align*}
	&K_1(u,v) \approx K_1^h(u,v)= K^1_{i,j},\quad\text{when} \quad u\in \Lambda_i, ~v\in \Lambda_j,\\
	& K_2(u,v) \approx K_2^h(u,v)= K^2_{i,j},\quad\text{when} \quad u\in \Lambda_i, ~v\in \Lambda_j,\\
	& K_3(u,v) \approx K_3^h(u,v)= K^3_{i,j},\quad\text{when} \quad u\in \Lambda_i, ~v\in \Lambda_j.
\end{align*}
Where $\ds K^h_i$ be the numerical approximation of the collision kernel $\ds K_i$ $\left(\text{for } i=1,2,3\right)$. The numerical approximate values $N_i(t)$ at time $t_n$ is denoted by $N_i^n$. Therefore, the wave density function can be represented as 
\begin{align}\label{eq_3.2}
	N_\omega \approx \sum_{i=1}^{I} N_i^n\Delta \omega_i \delta\left(\omega-\omega_i\right).
\end{align}

To formulate the numerical scheme, we also need to the following sets of indices as
\begin{align*}
	&\mathcal{I}_{j,k}^i:= \{(j, k) \in \mathbb{N}\times \mathbb{N}: \omega_{i-1/2} \le \omega_j +\omega_k < \omega_{i+1/2}\},\\
	& \mathcal{J}_{j,k}^i := \{(j, k) \in \mathbb{N}\times \mathbb{N}: \omega_{i-1/2} \le \omega_j - \omega_k < \omega_{i+1/2}\},\\
	& \mathcal{K}_{j,k}^\ast:= \{(j, k) \in \mathbb{N}\times \mathbb{N}:  \omega_j +\omega_k >R\}.
\end{align*}

Using the aforementioned notation of the index sets with the approximation \eqref{eq_3.2}, the numerical \emph{Finite Volume Scheme} (FVS) can be written as follows:
\begin{align}\label{eq_6}
	N_i^{n+1}=N_i^n + \Delta t^n  &\left(\sum_{(j,k)\in \mathcal{I}_{j,k}^i} K^1_{j,k} N_j^n N_k^n \frac{\Delta \omega_j \Delta \omega_k}{\Delta \omega_i} -2 \sum_{j=1}^{I} K^1_{i,j} N_i^n N_j^n \Delta \omega_j \right.\notag\\
	&\left. + \sum_{j=i+1}^{I}  \left(K^2_{j-i,i}+K^3_{j-i,i}\right)N^n_i N^n_j \Delta \omega_j-\sum_{j=1}^{i-1} \left(K^2_{i-j,j}+K^3_{i-j,j}\right)N^n_i N^n_j \Delta \omega_j \right.\notag\\
	&\left.\quad + \sum_{(j,k)\in \mathcal{J}_{j,k}^i} \left(K^2_{j-k,k}+K^3_{j-k,k}\right) N_j^n N_k^n  \frac{\Delta \omega_j \Delta \omega_k}{\Delta \omega_i} \right) .
\end{align}

\subsection{New formulation of the energy conserving scheme}
Conserving the total energy using an FVS is one of the primary goals of this article. In this regard, in addition to verifying all the theoretical aspects of the three-wave kinetic equation \eqref{1.1}, the scheme \eqref{eq_6} is also capable of predicting the total wave action. However, our proposed scheme does not conserve the total energy of the system. This can be achieved by incorporating suitable weight functions into the formulation. Consequently, our newly formulated energy-conserving scheme takes the following form:

\begin{align}\label{eq_7}
	N_i^{n+1}=N_i^n + \Delta t^n  &\left(\sum_{(j,k)\in \mathcal{I}_{j,k}^i} K^1_{j,k} N_j^n N_k^n \frac{\Delta \omega_j \Delta \omega_k}{\Delta \omega_i} \alpha_{j,k}  -2 \sum_{j=1}^{I} K^1_{i,j} N_i^n N_j^n \Delta \omega_j \right.\notag\\
	&\left. + \sum_{j=i+1}^{I}  \left(K^2_{j-i,i}+K^3_{j-i,i}\right)N^n_i N^n_j \Delta \omega_j-\sum_{j=1}^{i-1} \left(K^2_{i-j,j}+K^3_{i-j,j}\right)N^n_i N^n_j \Delta \omega_j \beta_{i,j} \right.\notag\\
	&\left.\quad + \sum_{(j,k)\in \mathcal{J}_{j,k}^i} \left(K^2_{j-k,k}+K^3_{j-k,k}\right) N_j^n N_k^n  \frac{\Delta \omega_j \Delta \omega_k}{\Delta \omega_i} \right) .
\end{align}

Where the weights $\ds \alpha_{j,k}$ are $\beta_{i,j}$ are the weights responsible for the conservation of energy. The weights are defined as 
\begin{align*}
	\alpha_{j,k} := \left\{\begin{array}{ll}
		\frac{\omega_j + \omega_k}{\omega_i}, &\mbox{if}\quad \omega_j + \omega_k \le R, \vspace{0.2cm}\\
		0 , &\mbox{otherwise}.
	\end{array}\right.
\end{align*}
\begin{align*}
	\beta_{i,j}:= \left\{\begin{array}{ll}
		\frac{ 2\omega_j}{\omega_i}, &\mbox{if}\quad 0 < \omega_i,  \omega_j \le R, \vspace{0.2cm}\\
		0 , &\mbox{otherwise}.
	\end{array}\right. 
\end{align*}
Before proceeding to the propositions, we first impose a constraint on the kernel to ensure the mathematical validity of the numerical method. This constraint is defined as follows:
\begin{align}\label{eq_10}
	K^{i}\left(\omega,\mu\right):= \left\{\begin{array}{ll}
		K^{i}\left(\omega,\mu\right), &\mbox{if}\quad 0 < \omega+\mu\le R\text{ and } \omega,\mu>0, \vspace{0.2cm}\\
		0 , &\mbox{otherwise}.
	\end{array}\right. 
\end{align}
for $i=1,2,3$.  This constraint is applied purely for the mathematical verification of the properties of total energy conservation and the consistency of the numerical method.

\begin{Proposition}\label{Prop_1}
	The proposed finite volume scheme \eqref{eq_7} is energy conserving.
\end{Proposition}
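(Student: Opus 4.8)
The plan is to prove the stronger statement that a single step of the forward--Euler update \eqref{eq_7} preserves the discrete total energy $E^n:=\sum_{i=1}^{I}\omega_i N_i^n\,\Delta\omega_i$, i.e. to show $\sum_{i=1}^{I}\omega_i\,\Delta\omega_i\,(N_i^{n+1}-N_i^n)=0$. Multiplying \eqref{eq_7} by $\omega_i\,\Delta\omega_i$ and summing over $i$, the increment decomposes into a coagulation block (the two sums carrying $K^1$) and a fragmentation block (the three sums carrying $K^2,K^3$); I would show that each block vanishes on its own. Two facts are used repeatedly: the symmetry $K^m_{j,k}=K^m_{k,j}$, inherited from $K_m(\omega,\mu)=K_m(\mu,\omega)$, and the truncation convention \eqref{eq_10}, which annihilates every kernel value with $\omega_j+\omega_k>R$ and therefore lets me extend the constrained sums to the full index range without changing their value.

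For the coagulation block I would first interchange the order of summation in the gain term. Because the index sets $\mathcal{I}_{j,k}^i$ partition the admissible pairs $(j,k)$ according to the cell $\Lambda_i$ containing $\omega_j+\omega_k$, the double sum $\sum_i\sum_{(j,k)\in\mathcal{I}_{j,k}^i}$ collapses to a single sum over all $(j,k)$ with $\omega_j+\omega_k\le R$. The weight $\alpha_{j,k}=(\omega_j+\omega_k)/\omega_i$ is tailored precisely to cancel the factor $\omega_i$ produced by the energy multiplier and to replace it by the exact reaction energy $\omega_j+\omega_k$, so the gain reads $\sum_{j,k}(\omega_j+\omega_k)K^1_{j,k}N_j^nN_k^n\Delta\omega_j\Delta\omega_k$. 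Splitting $\omega_j+\omega_k$, using the symmetry of $K^1$, and relabelling $j\leftrightarrow k$ turns this into $2\sum_{i,j}\omega_i K^1_{i,j}N_i^nN_j^n\Delta\omega_i\Delta\omega_j$, which cancels the energy-weighted loss term $-2\sum_{i,j}\omega_i K^1_{i,j}N_i^nN_j^n\Delta\omega_i\Delta\omega_j$ exactly.

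For the fragmentation block I would reorganize the three sums around a single elementary reaction. After interchanging the order of summation, each of the three sums can be rewritten over ordered pairs of cells $(p,q)$ with $p<q$: the sum $\sum_{j>i}$ records a contribution at the small cell $\Lambda_p$, the sum $\sum_{j<i}$ (the one carrying $\beta$) a loss at the large cell $\Lambda_q$, and the $\mathcal{J}$-sum a contribution at the cell $\Lambda_r$ containing the difference $\omega_q-\omega_p$. Using the symmetry of $K^2$ and $K^3$ so that the combined kernel $K^2_{q-p,p}+K^3_{q-p,p}$ appears with the common rate $N_p^nN_q^n\Delta\omega_p\Delta\omega_q$ in all three, the net energy-weighted coefficient attached to each reaction reduces to an expression in $\omega_p$, $\omega_r$, $\omega_q$ and the weight $\beta$. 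The weight $\beta_{i,j}$ is introduced exactly to force this coefficient to vanish, the cancellation hinging on the additivity of the reaction energy, $\omega_p+\omega_r=\omega_q$, i.e. on the representative of the difference cell together with the smaller representative reproducing the larger one.

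The step I expect to be the main obstacle is this last one: keeping the three index ranges $\{j>i\}$, $\{j<i\}$ and $\mathcal{J}_{j,k}^i$ mutually consistent under the relabelling into the $(p,q)$ parametrization, checking that the disparate factors $\Delta\omega_j$, $\Delta\omega_j\Delta\omega_k/\Delta\omega_i$ and the weight $\beta_{i,j}$ fuse into a single symmetric rate, and verifying that the resulting energy-weighted coefficient is identically zero. The coagulation block, by comparison, I expect to be routine once the summation order is interchanged. Assembling the two vanishing blocks then yields $E^{n+1}=E^n$, which is the claimed energy conservation.
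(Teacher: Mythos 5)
Your overall strategy --- multiplying the update by $\omega_i\Delta\omega_i$, summing over $i$, splitting the increment into a coagulation block and a fragmentation block, and showing each vanishes --- is exactly the paper's, and your coagulation block reproduces the paper's argument step for step: the weight $\alpha_{j,k}$ converts the gain's $\omega_i$ into $\omega_j+\omega_k$, the sum over $i$ collapses because the sets $\mathcal{I}_{j,k}^i$ partition the admissible pairs, the truncation \eqref{eq_10} (together with $\alpha_{j,k}=0$ for $\omega_j+\omega_k>R$) lets the constrained sum be extended, and the symmetry of $K^1$ turns the gain into $2\sum_{i,j}\omega_i K^1_{i,j}N_i^nN_j^n\Delta\omega_i\Delta\omega_j$, cancelling the loss. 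That part is sound and matches the paper.

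The gap sits precisely at the step you flag as the main obstacle, and the mechanism you name there is not the one the weight $\beta$ implements. With $\beta_{q,p}=2\omega_p/\omega_q$, the energy-weighted loss at the large cell $\Lambda_q$ per reaction with small partner $\Lambda_p$ is $\omega_q\,\beta_{q,p}=2\omega_p$, not $\omega_q$. So if, as you propose, the gain at the difference cell $\Lambda_r$ is booked with energy $\omega_r$, the net coefficient of a single reaction is $\omega_p+\omega_r-2\omega_p=\omega_r-\omega_p$, and the additivity $\omega_p+\omega_r=\omega_q$ does not make this vanish. Indeed, if additivity of the representatives were the operative cancellation, no weight $\beta$ would be needed at all --- the unweighted loss $\omega_q$ would already balance the two gains --- yet the unweighted scheme \eqref{eq_6} is exactly the one the authors state fails to conserve energy. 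What the paper actually does, in its identity \eqref{eq_9}, is rewrite the collapsed $\mathcal{J}$-sum in the form $\sum_i\sum_{j=i+1}^{I}\omega_i\left(K^2_{j-i,i}+K^3_{j-i,i}\right)N_j^nN_i^n\Delta\omega_j\Delta\omega_i$, i.e.\ it books the difference-cell gain with weight equal to the \emph{smaller partner's} representative $\omega_p$, so that the two gains carry $\omega_p$ each and the $\beta$-weighted loss carries $2\omega_p$; the block then cancels after relabelling $i\leftrightarrow j$ in the loss. To close your argument you must either justify that identification of the difference-cell weight (this is where the real work, and all the grid-dependence, is concentrated) or redesign the weight so that the loss carries $\omega_p+\omega_r$; the additivity appeal as stated does not complete the proof.
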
 

\begin{proof}
	The above result can be proved as follows. Multiply the discrete formulation \eqref{eq_6} by $\ds \omega_i\Delta \omega_i$ and taking sum over all i on both side, we can obtain
	\begin{align*}
		\sum_{i=1}^{I} \omega_i N_i^{n+1} \Delta \omega_i = \sum_{i=1}^{I} \omega_i N_i^{n} \Delta \omega_i + \Delta t^n T.
	\end{align*}
	Where\allowdisplaybreaks
	\begin{align*}
		T=& \sum_{i=1}^{I} \sum_{(j,k)\in \mathcal{I}_{j,k}^i} \omega_i K^1_{j,k} N_j^n N_k^n \Delta \omega_j \Delta \omega_k \alpha_{j,k} -2 \sum_{i=1}^{I} \sum_{j=1}^{I}\omega_i K^1_{i,j} N_i^n N_j^n {\Delta\omega_i}\Delta \omega_j  \\
		&\quad+\sum_{i=1}^{I} \sum_{(j,k)\in \mathcal{J}_{j,k}^i} \omega_i \left(K^2_{j-k,k}+K^3_{j-k,k}\right) N_j^n N_k^n  \Delta \omega_j \Delta \omega_k \\
		&\quad- \sum_{i=2}^{I} \sum_{j=1}^{i-1} \omega_i \left(K^2_{i-j,j}+K^3_{i-j,j}\right)N^n_i N^n_j \Delta \omega_i {\Delta\omega_j} \beta_{i,j}\\
		&\quad +  \sum_{i=1}^{I}\sum_{j=i+1}^{I} \omega_i  \left(K^2_{j-i,i}+K^3_{j-i,i}\right)N^n_i N^n_j \Delta \omega_j \Delta \omega_i.
	\end{align*}
	
	In order to prove the energy conservation we need to show that $T=0$. First we simplify the terms in right hand side of $T$.
	
	From the definition of the weight $\alpha$ we have
	\begin{align*}
		\sum_{(j,k)\in \mathcal{K}_{j,k}^\ast} K^1_{j,k} N_j^n N_k^n \Delta \omega_j \Delta \omega_k \alpha_{j,k} = 0.
	\end{align*}
	Further, $\ds \sum_{i=1}^{I} \sum_{j=1}^{I} $ represents all possible combinations of the cells $i$ and $j$. Here two cases will arise: either the sum of cell representative of the cells $i$ and $j$, i.e., $\omega_i + \omega_j$ will fall either inside or outside of the computational domain. So, we can write $\ds \sum_{j=1}^{I} = \sum_{(j,k)\in \mathcal{I}_{j,k}^i} + \sum_{(j,k)\in \mathcal{K}_{j,k}^\ast}$. Therefore, using this relation, the first term of $T$ can be written as
	\begin{align}\label{eq_8}
		\sum_{i=1}^{I} \sum_{(j,k)\in \mathcal{I}_{j,k}^i} {\omega_i} K^1_{j,k} N_j^n N_k^n \Delta \omega_j \Delta \omega_k \alpha_{j,k} = \sum_{i=1}^{I} \sum_{j=1}^{I} \left(\omega_i+\omega_j\right) K^1_{i,j} N_i^n N^n_j \Delta \omega_i \Delta \omega_j.
	\end{align}
	The third term also can be simplified in the similar manner
	\begin{align}\label{eq_9}
		\sum_{i=1}^{I} \sum_{(j,k)\in \mathcal{J}_{j,k}^i} \omega_i \left(K^2_{j-k,k}+K^3_{j-k,k}\right) N_j^n N_k^n  \Delta \omega_j \Delta \omega_k =\sum_{i=1}^{I}  \sum_{j=i+1}^{I} \omega_i \left(K^2_{j-i,i}+K^3_{j-i,i}\right) N_j^n N_i^n  \Delta \omega_j \Delta \omega_i.
	\end{align}
	Finally, using the simplified values \eqref{eq_8}, \eqref{eq_9} and the weight $\beta$ in $T$.\allowdisplaybreaks
	\begin{align*}
		T&= \sum_{i=1}^{I} \sum_{j=1}^{I} \left(\omega_i+\omega_j\right) K^1_{i,j} N_i^n N^n_j \Delta \omega_i \Delta \omega_j - 2 \sum_{i=1}^{I} \sum_{j=1}^{I}\omega_i K^1_{i,j} N_i^n N_j^n {\Delta\omega_i} \Delta \omega_j \\
		&\quad+\sum_{i=1}^{I}  \sum_{j=i+1}^{I} \omega_i \left(K^2_{j-i,i}+K^3_{j-i,i}\right) N_j^n N_i^n  \Delta \omega_j \Delta \omega_i\\
		&\quad- 2\sum_{i=2}^{I} \sum_{j=1}^{i-1} \omega_j \left(K^2_{i-j,j}+K^3_{i-j,j}\right)N^n_i N^n_j \Delta \omega_i {\Delta\omega_j} \\
		& \quad +   \sum_{i=1}^{I}\sum_{j=i+1}^{I} \omega_i \left(K^2_{j-i,i}+K^3_{j-i,i}\right)N^n_i N^n_j \Delta \omega_j \Delta \omega_i\\
		&= \sum_{i=1}^{I} \sum_{j=1}^{I} \left(\omega_i+\omega_j\right) K^1_{i,j} N_i^n N^n_j \Delta \omega_i \Delta \omega_j - 2 \sum_{i=1}^{I} \sum_{j=1}^{I}\omega_i K^1_{i,j} N_i^n N_j^n {\Delta\omega_i} \Delta \omega_j \\
		&\quad+\sum_{i=1}^{I}  \sum_{j=i+1}^{I} \omega_i \left(K^2_{j-i,i}+K^3_{j-i,i}\right) N_j^n N_i^n  \Delta \omega_j \Delta \omega_i\\
		&\quad- 2\sum_{j=1}^{I} \sum_{i=j+1}^{I} \omega_j \left(K^2_{i-j,j}+K^3_{i-j,j}\right)N^n_i N^n_j \Delta \omega_i {\Delta\omega_j} \\
		& \quad +   \sum_{i=1}^{I}\sum_{j=i}^{I} \omega_i \left(K^2_{j-i,i}+K^3_{j-i,i}\right)N^n_i N^n_j \Delta \omega_j \Delta \omega_i=0.
	\end{align*}
	Hence, the total energy of the system is conserved by the proposed scheme.
\end{proof}

\section{Convergence analysis of the FVS}\label{Sec_3}
Let us express the numerical approximate solution in the vector form $\hat{\mathbf{F}}:= \{\hat{F_1},\hat{F_2},...,\hat{F_I}\}$ where $\hat{F_i}= \hat{N_i}\Delta \omega_i$ and $\hat{N_i}$ be the approximate solution satisfying the following relation,
\begin{align}\label{con_0}
	N_\omega = \sum_{i=1}^{I} F_i  \delta\left(\omega-\omega_i\right) + \mathcal{O}\left(\Delta \omega^3\right).
\end{align}
We also denote the projected exact solution by $\ds \mathbf{F}:= \{F_1,F_2,...,F_I\}$. Then FVS \eqref{eq_6} can be rewritten in the following vector form
\begin{align}\label{con_1}
	\frac{\dd \hat{\mathbf{F}}}{\dd t}=\hat{\mathbf{J}}\left(\hat{\mathbf{F}}\right).
\end{align}
Here $\hat{\mathbf{J}}:= \{\hat{J}_1,\hat{J}_2,...,\hat{J}_I\}$ is a vector in $\mathbb{R}^I$, whose $i-$th component is given by
\begin{align}\label{con_2}
	\hat{J}_i\left(\hat{\mathbf{F}}\right):= \sum_{k=1}^{5} \hat{Q}_i^k \left(\hat{\mathbf{F}}\right).
\end{align}
Where
\begin{align}\label{con_3}
	\hat{Q}_i^1 \left(\hat{\mathbf{F}}\right):=\sum_{(j,k)\in \mathcal{I}_{j,k}^i} K^1_{j,k} \hat{F}_j(t) \hat{F}_k(t),
\end{align}
\begin{align}\label{con_4}
	\hat{Q}_i^2 \left(\hat{\mathbf{F}}\right):=-2 \sum_{j=1}^{I} K^1_{i,j} \hat{F}_i(t) \hat{F}_j(t),
\end{align}
\begin{align}\label{con_5}
	\hat{Q}_i^3 \left(\hat{\mathbf{F}}\right):=\sum_{(j,k)\in \mathcal{J}_{j,k}^i} \left(K^2_{j-k,k}+K^3_{j-k,k}\right) \hat{F}_j(t) \hat{F}_k(t),
\end{align}
\begin{align}\label{con_6}
	\hat{Q}_i^4 \left(\hat{\mathbf{F}}\right):=-\sum_{j=1}^{i-1} \left(K^2_{i-j,j}+K^3_{i-j,j}\right)\hat{F}_i(t) \hat{F}_j(t),
\end{align}
\begin{align}\label{con_7}
	\hat{Q}_i^5 \left(\hat{\mathbf{F}}\right):=\sum_{j=i+1}^{I}  \left(K^2_{j-i,i}+K^3_{j-i,i}\right)\hat{F}_i(t) \hat{F}_j(t).
\end{align}
The subsequent part of this section we consider the discrete $L^1$ norm
\begin{align*}
	\|\mathbf{F}(t)\| := \sum_{i=1}^{I} \left|F_i(t)\right|.
\end{align*}
Moreover, we consider the space $\ds \mathcal{C}^2\left(\left(0,R\right]\right)$, the space of twice continuous differentiable on $\left(0,R\right]$. For the sake of simplicity of our convergence analysis, we assume that the wave kernels are lies in the space $\ds \mathcal{C}^2\left(\left(0,R\right]\times \left(0,R\right]\right)$, that is
\begin{align}\label{con_8}
	K_i (\omega,\mu)\in \mathcal{C}^2\left(\left(0,R\right]\times \left(0,R\right]\right),\quad\text{for}\quad i=1,2,3.
\end{align}

In order to prove the convergence analysis of the above defined numerical scheme, we need to define some useful definition and results from \cite{hundsdorfer2013numerical} and \cite{linz1975convergence}, which we will use in the rest of our study.

\begin{defn}\label{def_1}
	The spatial discretization error is defined by the residual from
	substituting the exact solution $\ds \mathbf{F}:= \{F_1,F_2,...,F_I\}$ into the discrete system as
	\begin{align*}
		\sigma(t) = \frac{\dd \mathbf{F}}{\dd t} - \hat{\mathbf{J}}\left(\mathbf{F}\right).
	\end{align*}
	We will say the numerical scheme \eqref{con_1} is $p-$th order consistent if, $\Delta \omega \longrightarrow 0$
	\begin{align}\label{con_9}
		\|\sigma(t)\| = \mathcal{O}\left(\Delta\omega^p\right) \quad\text{uniformly for all}\quad 0\le t\le T.
	\end{align}
\end{defn}

Our first goal is to prove that the mapping $\ds \hat{\mathbf{J}}$ satisfies the Lipschitz condition with a Lipschitz constant $\gamma$ that is independent of the mesh. Before doing so, we will establish the uniform boundedness of the numerical solution in $L^1\left(0,T\right)$ through the following proposition.
\begin{Proposition}\label{Prop_2}
	Let the wave $K_1$, $K_2$ and $K_3$ {satisfy} the assumption \eqref{con_8}. Then there 
	{exists} a positive constant $\mathcal{A}(T)$ independent of mesh satisfying the condition
	\begin{align*}
		\|\mathbf{F}(t)\| \le \mathcal{A}(T)\quad\text{for all}\quad t \in [0,T].
	\end{align*}
\end{Proposition}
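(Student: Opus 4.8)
The plan is to control the discrete $L^1$ norm $\|\mathbf F(t)\|=\sum_{i=1}^I|F_i(t)|$ by deriving and integrating a scalar differential inequality. Since the underlying solution $N_\omega$ of the truncated model \eqref{eq_3} is nonnegative, each cell mass $F_i(t)$ is nonnegative, so $\|\mathbf F(t)\|=\sum_{i=1}^I F_i(t)$ and the absolute values may be dropped. Differentiating and using that $\mathbf F$ satisfies the scheme up to the spatial residual of Definition \ref{def_1}, i.e. $\frac{\dd\mathbf F}{\dd t}=\hat{\mathbf J}(\mathbf F)+\sigma$, I would reduce the problem to estimating $\sum_{i=1}^I\hat J_i(\mathbf F)=\sum_{i=1}^I\sum_{k=1}^5\hat Q_i^k(\mathbf F)$, the contribution $\sum_i\sigma_i$ being of lower order.

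The central step is to sum the five flux contributions over $i$ and exploit exactly the cancellations used in the proof of Proposition \ref{Prop_1}. Writing $\sum_{j=1}^I=\sum_{(j,k)\in\mathcal I_{j,k}^i}+\sum_{(j,k)\in\mathcal K_{j,k}^\ast}$ and invoking the kernel cutoff \eqref{eq_10} shows that the forward-cascade (coagulation) part is dissipative for the number,
\[
\sum_{i=1}^I\big(\hat Q_i^1+\hat Q_i^2\big)(\mathbf F)=-\sum_{\omega_j+\omega_k\le R}K^1_{j,k}F_jF_k\le 0 .
\]
Relabelling the index set $\mathcal J_{j,k}^i$ next gives $\sum_{i=1}^I(\hat Q_i^3+\hat Q_i^4)(\mathbf F)=0$, so that the only surviving contribution is the nonnegative back-scattering gain $\sum_{i=1}^I\hat Q_i^5(\mathbf F)=\sum_{i<j}\big(K^2_{j-i,i}+K^3_{j-i,i}\big)F_iF_j$.

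It then remains to bound this single positive term. With $\bar K:=\max_{i\in\{2,3\}}\sup_{(0,R]^2}|K_i|$, which is finite and independent of the mesh by \eqref{con_8} and the cutoff \eqref{eq_10}, one has $\sum_{i<j}\big(K^2_{j-i,i}+K^3_{j-i,i}\big)F_iF_j\le 2\bar K\sum_{i<j}F_iF_j\le\bar K\|\mathbf F\|^2$. Collecting the three pieces yields the Riccati-type inequality $\frac{\dd}{\dd t}\|\mathbf F(t)\|\le\bar K\|\mathbf F(t)\|^2+\|\sigma(t)\|$. Since $\|\mathbf F(0)\|$ coincides, up to the projection error in \eqref{con_0}, with the mesh-independent initial total number, integrating this inequality produces a bound $\|\mathbf F(t)\|\le\mathcal A(T)$ in which $\mathcal A(T)$ depends only on $T$, $\bar K$ and the initial mass, and not on the discretization parameters.

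The main obstacle is precisely this closure: the right-hand side is quadratic, so a crude comparison only bounds $\|\mathbf F\|$ on a finite interval determined by $\bar K$ and the initial mass. I expect to handle it by using the dissipative coagulation term $-\sum_{\omega_j+\omega_k\le R}K^1_{j,k}F_jF_k$ to absorb part of the gain and, should a bound on an arbitrary $[0,T]$ be required, by transferring the a priori control of the total number of the exact solution of \eqref{eq_3} to $\mathbf F$ through the projection \eqref{con_0}. In either case the resulting constant $\mathcal A(T)$ is mesh-independent, which is exactly the property needed for the subsequent uniform Lipschitz estimate on $\hat{\mathbf J}$.
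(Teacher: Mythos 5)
Your proposal follows essentially the same route as the paper's proof: sum the semi-discrete fluxes over $i$, exploit the index relabelling and cancellations already used in Proposition \ref{Prop_1} (the $\hat Q^3$/$\hat Q^4$ back-scattering terms cancel and the coagulation part is sign-definite), invoke the uniform kernel bound coming from \eqref{con_8}, and land on the Riccati-type inequality $\frac{\dd}{\dd t}\|\mathbf F(t)\|\le C\mathcal{L}\,\|\mathbf F(t)\|^{2}$. The one substantive difference is that you explicitly flag that this quadratic inequality by itself only controls $\|\mathbf F\|$ up to the blow-up time of the comparison ODE, whereas the paper simply asserts that the uniform bound ``directly follows''; your concern is legitimate, and your proposed remedies (absorbing the gain into the dissipative $K^1$ term, or transferring the exact solution's mass bound to $\mathbf F$ via the projection \eqref{con_0}) supply a closure step that the paper's own argument leaves implicit.
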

\begin{proof}
	Taking sum on the both side of equation \eqref{con_1} over $i$ from $1$ to $I$, we get
	\begin{align*}
		\frac{\dd }{\dd t}\|\mathbf{F}(t)\|=& \sum_{i=1}^{I} \sum_{(j,k)\in \mathcal{I}_{j,k}^i} { K^1_{j,k} F_j(t) F_k(t)} -2 \sum_{i=1}^{I} \sum_{j=1}^{I} K^1_{i,j} F_i(t) F_j(t)\\
		&+\sum_{i=1}^{I} \sum_{(j,k)\in \mathcal{J}_{j,k}^i} \left(K^2_{j-k,k}+K^3_{j-k,k}\right) F_j(t) F_k(t) - \sum_{i=2}^{I} \sum_{j=1}^{i-1} \left(K^2_{i-j,j}+K^3_{i-j,j}\right)F_i(t) F_j (t)\\
		& +  \sum_{i=1}^{I}\sum_{j=i+1}^{I} \left(K^2_{j-i,i}+K^3_{j-i,i}\right)F_i(t) F_j (t).
	\end{align*}
	Now proceed similar as Proposition \ref{Prop_1} to simplify the first and third term in the right hand side of the above equation, yields
	\begin{align*}
		\frac{\dd }{\dd t}\|\mathbf{F}(t)\| =& -\sum_{i=1}^{I} \sum_{j=1}^{I} K^1_{i,j} F_i(t) F_j(t) +  \sum_{i=1}^{I}  \sum_{j=i+1}^{I} \left(K^2_{j-i,i}+K^3_{j-i,i}\right) F_j(t) F_i(t)\\
		&- \sum_{i=2}^{I} \sum_{j=1}^{i-1} \left(K^2_{i-j,j}+K^3_{i-j,j}\right)F_i(t) F_j (t) +  \sum_{i=1}^{I}\sum_{j=i+1}^{I} \left(K^2_{j-i,i}+K^3_{j-i,i}\right)F_i(t) F_j (t).
	\end{align*}
	Since, we assume that $\ds K_i (\omega,\mu)\in \mathcal{C}^2\left(\left(0,R\right]\times \left(0,R\right]\right)$ for $i=1,2,3$, so there exist a constant $\mathcal{L}$ such that
	\begin{align}\label{con_10}
		\sup_{\left(\omega,\mu\right)\in \left(0,R\right]} K_i\left(\omega,\mu\right) \le \mathcal{L}.
	\end{align}
	This estimation reduces the above identity into the following differential inequality
	\begin{align*}
		\frac{\dd }{\dd t}\|\mathbf{F}(t)\| \le 2 \mathcal{L} \|\mathbf{F}(t)\|^2, \quad \text{for all}\quad 0\le t\le T.
	\end{align*}
	{Hence, the uniform boundedness result directly follows from this inequality}.
\end{proof}
\begin{Proposition}\label{Prop_3}
	Assume that the wave kernels $K_1$, $K_2$ and $K_3$ {lie} in the space $\mathcal{C}^2\left(\left(0,R\right]\times \left(0,R\right]\right)$. Then there {exists} a positive constant $\gamma$ independent of mesh such that
	\begin{align}\label{con_11}
		\|\hat{\mathbf{J}}\left(\mathbf{F}\right)-\hat{\mathbf{J}}\left(\hat{\mathbf{F}}\right)\| \le \gamma \|\mathbf{F}-\hat{\mathbf{F}}\|,\quad \text{for all}\quad \mathbf{F}, \hat{\mathbf{F}}\in \mathbb{R}^I.
	\end{align}
\end{Proposition}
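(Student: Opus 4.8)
The plan is to exploit the fact that each of the five building blocks $\hat{Q}_i^k$ defined in \eqref{con_3}--\eqref{con_7} is a \emph{bilinear} form in the entries of its vector argument. Writing $ab-cd = a(b-d) + d(a-c)$ for each product of components, every difference $\hat{Q}_i^k(\mathbf{F}) - \hat{Q}_i^k(\hat{\mathbf{F}})$ splits into a sum of terms, each carrying exactly one factor of the form $F_\ell - \hat{F}_\ell$ multiplied by a single entry drawn from $\mathbf{F}$ or $\hat{\mathbf{F}}$ together with a kernel value $K^m_{\cdot,\cdot}$. Summing over $i$, taking absolute values, and invoking the triangle inequality then reduces the estimate of $\|\hat{\mathbf{J}}(\mathbf{F}) - \hat{\mathbf{J}}(\hat{\mathbf{F}})\|$ to controlling five double sums of the schematic shape $\sum_i \sum_{(j,k)} K^m_{\cdot,\cdot}\,|F_a - \hat{F}_a|\,|G_b|$, where $G$ is either $\mathbf{F}$ or $\hat{\mathbf{F}}$.

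The decisive point, and the place where mesh-independence of $\gamma$ must be earned, is to ensure that summing over the output index $i$ does \emph{not} introduce a factor of $I$. For the convolution-type term $\hat{Q}_i^1$ and the back-scatter term $\hat{Q}_i^3$ this is exactly the partition property of the index families: the sets $\mathcal{I}_{j,k}^i$ (respectively $\mathcal{J}_{j,k}^i$) are disjoint in $i$ and their union over $i$ exhausts all admissible pairs $(j,k)$, so $\sum_i \sum_{(j,k)\in\mathcal{I}_{j,k}^i}$ collapses to a single unrestricted double sum $\sum_{j,k}$ (and likewise for $\mathcal{J}_{j,k}^i$); this is the same bookkeeping already employed in Proposition \ref{Prop_1} and Proposition \ref{Prop_2}. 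For the remaining terms $\hat{Q}_i^2$, $\hat{Q}_i^4$, $\hat{Q}_i^5$ the index $i$ is itself one of the summation labels, so summing over $i$ and $j$ again yields a plain double sum. After bounding every kernel by the uniform constant $\mathcal{L}$ from \eqref{con_10}, each of the five contributions factors as a product of two discrete $L^1$ norms, namely $\mathcal{L}\,\|\mathbf{F}-\hat{\mathbf{F}}\|\,\big(\|\mathbf{F}\| + \|\hat{\mathbf{F}}\|\big)$ up to a fixed numerical multiple, with no residual dependence on $I$ or on $\Delta\omega$.

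Finally I would invoke Proposition \ref{Prop_2}, which guarantees $\|\mathbf{F}\|,\|\hat{\mathbf{F}}\| \le \mathcal{A}(T)$ on $[0,T]$, to replace $\|\mathbf{F}\| + \|\hat{\mathbf{F}}\|$ by $2\mathcal{A}(T)$ and collect the five estimates into a single constant $\gamma = C\,\mathcal{L}\,\mathcal{A}(T)$, where $C$ is an absolute constant equal to the sum of the per-term multiplicities. I expect the main obstacle to be precisely the verification of mesh-independence through these partition identities: a careless triangle-inequality bound that keeps the sum over $i$ on the outside would produce a spurious factor of $I$ and destroy the claim. A secondary point worth flagging is that, since $\hat{\mathbf{J}}$ is genuinely quadratic, the inequality cannot hold on all of $\mathbb{R}^I$ with a single fixed $\gamma$; the estimate is therefore to be understood on the ball $\{\|\cdot\| \le \mathcal{A}(T)\}$ in which both the projected exact solution and the numerical solution live, which is exactly the setting supplied by Proposition \ref{Prop_2} and all that the subsequent convergence argument requires.
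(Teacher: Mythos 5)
Your proposal follows essentially the same route as the paper's proof: the bilinear splitting of each product difference $F_jF_k-\hat{F}_j\hat{F}_k$, the collapse of $\sum_i\sum_{(j,k)\in\mathcal{I}^i_{j,k}}$ (and of $\mathcal{J}^i_{j,k}$) into unrestricted double sums, the uniform kernel bound $\mathcal{L}$ from \eqref{con_10}, and the a priori bound $\mathcal{A}(T)$ from Proposition \ref{Prop_2}, yielding a mesh-independent constant of the form $\gamma = C\,\mathcal{L}\,\mathcal{A}(T)$. Your closing caveat is also accurate: the paper's proof itself invokes Proposition \ref{Prop_2} to bound $F_j+\hat{F}_j$, so the estimate is really established only on the ball $\{\|\cdot\|\le\mathcal{A}(T)\}$ rather than for all $\mathbf{F},\hat{\mathbf{F}}\in\mathbb{R}^I$ as the statement literally asserts, which is all the subsequent convergence argument needs.
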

\begin{proof}
	In order to prove the mapping $\hat{\mathbf{J}}$ defined in \eqref{con_2} satisfy the Lipscitz condition, we will show that each of the term $\hat{Q}^k$ $(k=1,...,5)$ is satisfying the Lipscitz condition. Let,  $\mathbf{F}, \hat{\mathbf{F}}\in \mathbb{R}^I$,
	\begin{align*}
		\|\hat{Q}^1\left(\mathbf{F}\right)-\hat{Q}^1\left(\hat{\mathbf{F}}\right)\| &= \sum_{i=1}^{I}\left|\hat{Q}^1_i\left(\mathbf{F}\right)-\hat{Q}_i^1\left(\hat{\mathbf{F}}\right)\right|
		= \sum_{i=1}^{I}\sum_{(j,k)\in \mathcal{I}_{j,k}^i} \left|K^1_{j,k}\right|\left|F_j F_k-\hat{F}_j\hat{F}_k\right|.
	\end{align*}
	Now one can observe the following fact 
	\begin{align}\label{con_12}
		\left|F_j F_k-\hat{F}_j\hat{F}_k\right|= \frac{1}{2}\left|\left(F_j+\hat{F}_j\right)\left(F_k-\hat{F}_k\right) + \left(F_j-\hat{F}_j\right)\left(F_k+\hat{F}_k\right)\right|.
	\end{align}
	Using the Proposition \ref{Prop_2} together with condition \eqref{con_10} and identity \eqref{con_12} on the above norm reduce to
	\begin{align}
		\|\hat{Q}^1\left(\mathbf{F}\right)-\hat{Q}^1\left(\hat{\mathbf{F}}\right)\| &\le\frac{\mathcal{L}}{2}\sum_{j=1}^{I}\sum_{k=1}^{I}\left|\left(F_j+\hat{F}_j\right)\left(F_k-\hat{F}_k\right) + \left(F_j-\hat{F}_j\right)\left(F_k+\hat{F}_k\right)\right| \notag\\
		&\le 2\mathcal{L} \mathcal{A}(T)\|\mathbf{F}-\hat{\mathbf{F}}\|. \notag
	\end{align}
	Similar process can be follow to show the remaining terms that is $\hat{Q}^k$ $(k=2,...,5)$ are also satisfies the Lipscitz condition. And substitute all these estimations in the following identity
	\begin{align*}
		\|\hat{\mathbf{J}}\left(\mathbf{F}\right)-\hat{\mathbf{J}}\left(\hat{\mathbf{F}}\right)\|  = \sum_{k=1}^{5}\|\hat{Q}^k\left(\mathbf{F}\right)-\hat{Q}^k\left(\hat{\mathbf{F}}\right)\|.
	\end{align*}
	we can get a constant $\gamma$ which is depending only on $T$ and $\mathcal{L}$ not on the choice of mesh such that the inequality \eqref{con_11} holds.
\end{proof}
Now we are in the stage where we can start proving our main convergence theorem.
\begin{Theorem}\label{thm_3}
	{Let the wave interaction kernels $K_i$ $(i=1,2,3)$ are belong to the space} $\mathcal{C}^2\left(\left(0,R\right]\times \left(0,R\right]\right)$. Then the proposed finite volume scheme \eqref{con_1} is nonnegative and consistent with first order consistency. Consequently, the numerical scheme \eqref{con_1} is convergent and the order of convergence is the same of order of consistency.
\end{Theorem}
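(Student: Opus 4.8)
The plan is to establish the three assertions of the theorem---nonnegativity, first-order consistency in the sense of Definition \ref{def_1}, and convergence---in that order, since the last follows from a stability-plus-consistency argument built on the mesh-independent Lipschitz bound of Proposition \ref{Prop_3}.

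\emph{Nonnegativity.} I would argue by induction on the time level $n$, assuming $N_j^n\ge 0$ for all $j$. The key observation is that every loss contribution in the explicit scheme \eqref{eq_6} carries a factor $N_i^n$, so the update can be recast as
$$N_i^{n+1}=N_i^n\Bigl(1-\Delta t^n\,a_i^n\Bigr)+\Delta t^n\,b_i^n,$$
where $a_i^n:=2\sum_{j=1}^{I}K^1_{i,j}N_j^n\Delta\omega_j+\sum_{j=1}^{i-1}\bigl(K^2_{i-j,j}+K^3_{i-j,j}\bigr)N_j^n\Delta\omega_j$ collects the death rates and $b_i^n\ge 0$ gathers the genuinely nonnegative gain terms (the two bilinear sums over $\mathcal{I}_{j,k}^i$ and $\mathcal{J}_{j,k}^i$, together with the positive $\sum_{j>i}$ contribution, which is itself proportional to $N_i^n$). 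Using the kernel bound \eqref{con_10} and the uniform mass bound of Proposition \ref{Prop_2}, the coefficient $a_i^n$ is dominated by a constant independent of the mesh, so imposing the Courant-type restriction $\Delta t^n\,a_i^n\le 1$ makes the bracket nonnegative and hence $N_i^{n+1}\ge 0$.

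\emph{Consistency.} Here I would insert the projected exact solution $\mathbf{F}$ into the residual $\sigma(t)=\tfrac{\dd \mathbf{F}}{\dd t}-\hat{\mathbf{J}}(\mathbf{F})$ of Definition \ref{def_1} and compare, term by term, the exact cell integrals $\tfrac{\dd F_i}{\dd t}=\int_{\Lambda_i}\mathcal{Q}[N_\omega]\,\dd\omega$ against the discrete operators $\hat{Q}_i^k(\mathbf{F})$ in \eqref{con_3}--\eqref{con_7}. Exploiting the measure representation \eqref{con_0}, each exact bilinear integral collapses to a double sum over the same index sets, and the discrepancy is controlled by two sources of error: the replacement of $K_i(\omega_j,\omega_k)$ by the cell value $K^i_{j,k}$, which is $\mathcal{O}(\Delta\omega)$ by the mean-value theorem thanks to the $\mathcal{C}^2$ assumption \eqref{con_8}, and the $\mathcal{O}(\Delta\omega^3)$ remainder in \eqref{con_0} summed over the $\mathcal{O}(\Delta\omega^{-1})$ active cells. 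Taylor-expanding the kernels to second order and summing the local truncation errors then yields $\|\sigma(t)\|=\mathcal{O}(\Delta\omega)$ uniformly on $[0,T]$, which is the first-order consistency claimed in \eqref{con_9}.

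\emph{Convergence.} With consistency and the Lipschitz estimate \eqref{con_11} in hand, I would set $\mathbf{e}(t)=\mathbf{F}(t)-\hat{\mathbf{F}}(t)$ and subtract \eqref{con_1} from the residual identity $\tfrac{\dd \mathbf{F}}{\dd t}=\hat{\mathbf{J}}(\mathbf{F})+\sigma$, giving $\tfrac{\dd \mathbf{e}}{\dd t}=\hat{\mathbf{J}}(\mathbf{F})-\hat{\mathbf{J}}(\hat{\mathbf{F}})+\sigma$. Taking the discrete $L^1$ norm and using Proposition \ref{Prop_3} produces the differential inequality $\tfrac{\dd}{\dd t}\|\mathbf{e}\|\le\gamma\|\mathbf{e}\|+\|\sigma\|$, and Gronwall's lemma, together with $\|\mathbf{e}(0)\|=\mathcal{O}(\Delta\omega)$ from the projection and $\|\sigma\|=\mathcal{O}(\Delta\omega)$, gives $\|\mathbf{e}(t)\|\le C(T)\Delta\omega$; this is precisely the abstract consistency-implies-convergence mechanism recorded in \cite{hundsdorfer2013numerical,linz1975convergence}. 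I expect the consistency step to be the main obstacle, since it requires careful bookkeeping of the boundary cells in which the integration region $\{\omega_j+\omega_k\in\Lambda_i\}$ (respectively $\{\omega_j-\omega_k\in\Lambda_i\}$) is only partially covered by the index set $\mathcal{I}_{j,k}^i$ (respectively $\mathcal{J}_{j,k}^i$): these straddling cells are exactly where the midpoint and measure approximations degrade, and one must show their cumulative contribution does not spoil the first-order rate.
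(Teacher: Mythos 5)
Your consistency and convergence steps follow essentially the same route as the paper: you insert the projected exact solution via the representation \eqref{con_0}, compare each exact cell integral against the discrete fluxes \eqref{con_3}--\eqref{con_7} using midpoint quadrature and the $\mathcal{C}^2$ kernel bound, and close with the mesh-independent Lipschitz estimate of Proposition \ref{Prop_3} and a Gronwall/Linz-type argument (the paper simply cites \cite{linz1975convergence} rather than writing the differential inequality for $\mathbf{e}(t)$ explicitly). Your remark that the straddling boundary cells are the delicate point is exactly where the paper's computation produces the $\mathcal{O}\left(\Delta\omega^2\right)$ correction terms $E_1$ and $E_2$ in the treatment of \eqref{eq_5.5} and \eqref{eq_5.6}; summing the local errors over the $\mathcal{O}\left(\Delta\omega^{-1}\right)$ cells then gives $\|\sigma\|=\mathcal{O}\left(\Delta\omega\right)$, as you predict. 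Where you genuinely diverge is nonnegativity. The paper proves it for the semi-discrete system \eqref{con_1}: it checks that if a nonnegative vector has $\hat{F}_i=0$ then every flux proportional to $\hat{F}_i$ (namely $\hat{Q}^2_i$, $\hat{Q}^4_i$, $\hat{Q}^5_i$) vanishes while $\hat{Q}^1_i$ and $\hat{Q}^3_i$ remain nonnegative, so $\hat{J}_i\left(\hat{\mathbf{F}}\right)\ge 0$, and then invokes the standard positivity criterion (Theorem 4.1 of \cite{hundsdorfer2013numerical}) together with the Lipschitz property. You instead work with the fully discrete explicit update \eqref{eq_6} and induct on the time level after splitting the right-hand side as $N_i^n\left(1-\Delta t^n a_i^n\right)+\Delta t^n b_i^n$ with $b_i^n\ge 0$. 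Your splitting is correct, and $a_i^n$ is indeed bounded mesh-independently via \eqref{con_10} and Proposition \ref{Prop_2}, but this route buys nonnegativity only under the Courant-type restriction $\Delta t^n a_i^n\le 1$, a condition that the theorem (stated for the ODE system \eqref{con_1}) does not require and the paper never imposes; conversely, the paper's argument gives unconditional positivity of the semi-discrete system but says nothing about the time-stepped iteration. Both arguments are sound; just be aware that you are proving the nonnegativity claim for a slightly different object than the one named in the statement.
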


\begin{proof}	
	To prove the convergence of the proposed finite volume scheme \eqref{con_1}, our next goal is to demonstrate that the numerical scheme is both nonnegative and consistent. To begin, we show that the scheme is nonnegative.
	
	{\it Nonnegativity:}
	Consider any nonnegative approximate solution $\mathbf{\hat{F}}=\{\hat{F_1},\hat{F_2},...,\hat{F_I}\}$ for which the $i-$th component $\hat{F}_i=0$. Then from the definition of the numerical fluxes \eqref{con_3}-\eqref{con_7}, we can get
	$
	\hat{Q}_i^1 \left(\hat{\mathbf{F}}\right) \ge 0,~ \hat{Q}_i^2 \left(\hat{\mathbf{F}}\right) =0,~ \hat{Q}_i^3 \left(\hat{\mathbf{F}}\right)\ge 0,~ \hat{Q}_i^4 \left(\hat{\mathbf{F}}\right)=0\quad\text{and}\quad\hat{Q}_i^5 \left(\hat{\mathbf{F}}\right)=0.
	$
	Therefore, using these relations in \eqref{con_2}, we get $\hat{J}_i\left(\hat{\mathbf{F}}\right)\ge 0$ for any $i \in\{1,2,...,I\}$, moreover, $\mathbf{\hat{J}}$ is Lipscitz from Proposition \ref{Prop_3}. Then thanks to the Theorem $4.1$ of \cite{hundsdorfer2013numerical}, we can say that our proposed FVS is nonnegative.
	
	{\it Consistency:}
	To prove the consistency of the proposed scheme \eqref{con_1}, we need to calculate the discretization error for each of the terms \eqref{con_3}-\eqref{con_7}. Let us start from the first term and then change the order of the integration\allowdisplaybreaks
	\begin{align*}
		Q^1_i(t) &= \int_{\omega_{i-1/2}}^{\omega_{i+1/2}}\int_{0}^{\omega}K_1(\omega-\mu,\mu)N_{\omega-\mu}(t)N_{\mu}(t)\dd \mu \dd \omega\notag\\
		&= \left[\int_{\omega_{i-1/2}}^{\omega_{i+1/2}}\int_{0}^{\omega_{i-1/2}}+\int_{\omega_{i-1/2}}^{\omega_{i+1/2}}\int_{\omega_{i-1/2}}^\omega\right]K_1(\omega-\mu,\mu)N_{\omega-\mu}(t)N_{\mu}(t)\dd \mu \dd \omega \notag \\
		&= \left[\int_{\omega_{i-1/2}}^{\omega_{i+1/2}}\sum_{j=1}^{i-1}\int_{\omega_{j-1/2}}^{\omega_{j+1/2}}+\int_{\omega_{i-1/2}}^{\omega_{i+1/2}}\int_{\omega_{i-1/2}}^\omega\right]K_1(\omega-\mu,\mu)N_{\omega-\mu}(t)N_{\mu}(t)\dd \mu \dd \omega.
	\end{align*}
	On applying the relation \eqref{con_0} for the midpoint quadrature rule on the above equation and we get $Q^1_i(t)$ in the following form,
	\begin{align*}
		\mathcal{Q}^1_i 
		& = \sum_{j=1}^{i-1}\int_{\omega_{i-1/2}}^{\omega_{i+1/2}}\int_{\omega_{j-1/2}}^{\omega_{j+1/2}} K_1(\omega-\mu,\mu) \sum_{k=1}^{I} F_k(t) \delta\left(\omega-\mu-\omega_k\right)\sum_{l=1}^{I} F_l(t) \delta\left(\mu-\omega_l\right) \dd \mu \dd \omega \\
		& \quad + \int_{\omega_{i-1/2}}^{\omega_{i+1/2}}\int_{\mu}^{\omega_{i+1/2}} K_1(\omega-\mu,\mu) \sum_{k=1}^{I} F_k(t) \delta\left(\omega-\mu-\omega_k\right)\sum_{l=1}^{I} F_l(t) \delta\left(\mu-\omega_l\right) \dd \omega \dd \mu + \mathcal{O}\left(\Delta \omega^3\right) \\
		& = \sum_{j=1}^{i-1} F_j (t) \int_{\omega_{i-1/2}}^{\omega_{i+1/2}} K_1(\omega-\omega_j,\omega_j)  \sum_{k=1}^{I} F_k(t) \delta\left(\omega-\omega_j-\omega_k\right) {\dd \omega} \\
		& \qquad +F_i(t)  \int_{\omega_i}^{\omega_{i+1/2}} K_1(\omega-\omega_i,\omega_i)  \sum_{k=1}^{I} F_k(t) \delta\left(\omega-\omega_i-\omega_k\right)  {\dd \omega} + \mathcal{O}\left(\Delta \omega^3\right) \\
		& = \sum_{j=1}^{i-1} \sum_{\omega_{i-1/2}\le \omega_j+\omega_k <\omega_{i+1/2}} K^1_{j,k} F_j(t) F_k(t)  + F_i(t) \Delta \omega_i \sum_{\omega_i+\omega_k<\omega_{i+1/2}} K^1_{i,k} F_k(t)+ \mathcal{O}\left(\Delta \omega^3\right) \\
		& = \sum_{(j,k)\in \mathcal{I}_{j,k}^i} K^1_{j,k} F_j(t) F_k(t)  + \mathcal{O}\left(\Delta \omega^3\right)\\
		&= \hat{Q}_i^1 \left(\mathbf{F}\right) + \mathcal{O}\left(\Delta \omega^3\right).
	\end{align*}
	
	Using the simple application of midpoint quadrature rule, we can discretize the second term as follows\allowdisplaybreaks
	\begin{align*}
		\mathcal{Q}^2_i&= -2 \int_{\omega_{i-1/2}}^{\omega_{i+1/2}} \sum_{j=1}^{I} \int_{\omega_{j-1/2}}^{\omega_{j+1/2}}K_1(\omega,\mu)N_\omega(t) N_\mu(t) \dd \mu \dd \omega\\
		& = -2 \int_{\omega_{i-1/2}}^{\omega_{i+1/2}}\sum_{k=1}^{I} F_k(t) \delta\left(\omega-\omega_k\right) \sum_{j=1}^{I} \int_{\omega_{j-1/2}}^{\omega_{j+1/2}}K_1(\omega,\mu){\sum_{l=1}^{I} F_l(t) \delta\left(\mu-\omega_l\right)} \dd \mu \dd \omega + \mathcal{O}\left(\Delta \omega^3\right)\\
		&= -2 \sum_{j=1}^{I} K^1_{i,j} F_i(t) F_j(t)  + \mathcal{O}\left(\Delta \omega^3\right)\\
		&= \hat{Q}_i^2 \left(\mathbf{F}\right) + \mathcal{O}\left(\Delta \omega^3\right).
	\end{align*}
	
	To find the discretization error in the term $\mathcal{Q}^3_i$, we first substitute $\ds \mu = \mu'-\omega $ in the second integral of \eqref{eq_5.4} then replace $\mu'$ by $\mu$.
	\begin{align*}
		\mathcal{Q}^3_i(t)&=\int_{\omega_{i-1/2}}^{\omega_{i+1/2}}\int_{\omega}^{R}\left[ K_2(\omega,\mu-\omega)+K_3(\omega,\mu-\omega) \right]N_{\mu-\omega}N_{\mu}{\dd \mu}\dd \omega.
	\end{align*}
	Then proceed similar as $\mathcal{Q}^1_i(t)$ above, the reduced form of $\mathcal{Q}^3_i(t)$ can be written as follows
	\begin{align*}
		\mathcal{Q}^3_i
		& = \int_{\omega_{i-1/2}}^{\omega_{i+1/2}} \sum_{j=i+1}^{I} \int_{\omega_{j-1/2}}^{\omega_{j+1/2}}\left[K_2(\omega,\mu-\omega)+K_3(\omega,\mu-\omega) \right] N_{\mu-\omega}N_{\mu}\dd \mu \dd \omega\\
		&+ \int_{\omega_{i-1/2}}^{\omega_{i+1/2}} \int_{\omega}^{\omega_{i+1/2}}\left[K_2(\omega,\mu-\omega)+K_3(\omega,\mu-\omega) \right] N_{\mu-\omega}N_{\mu}\dd \mu \dd \omega\\
		%	& = \sum_{j=i+1}^{I} N^n_j \Delta \omega_j \int_{\omega_{i-1/2}}^{\omega_{i+1/2}} K_2(\omega,\omega_j-\omega) N^n_{\omega_j-\omega}\dd \omega + \int_{\omega_{i-1/2}}^{\omega_{i+1/2}} \int_{\omega_{i-1/2}}^{\mu} K_2(\omega,\mu-\omega)N^n_\mu N_{\mu-\omega}^n\dd \omega \dd \mu \\
		& = \sum_{j=i+1}^{I} F_j(t) \int_{\omega_{i-1/2}}^{\omega_{i+1/2}} \left[K_2(\omega,\omega_j-\omega)+K_3(\omega,\omega_j-\omega)\right] \sum_{k=1}^{I} F_k(t) \delta\left(\omega_j-\omega-\omega_k\right)\dd \omega \\
		&\qquad + F_i(t)  \int_{\omega_{i-1/2}}^{\omega_i} K_2(\omega,\omega_i-\omega)\sum_{k=1}^{I}F_k(t)\delta(\omega_i-\omega-\omega_k)\dd \omega+ \mathcal{O}\left(\Delta \omega^3\right)   \\
		& = \sum_{j=i+1}^{I} \sum_{\omega_{i-1/2} \le \omega_j -\omega_k < \omega_{i+1/2}} \left[K^2_{j-k,k}+K^3_{j-k,k}\right] F_j(t) F_k(t)\\
		&\qquad+ \sum_{\omega_{i-1/2}\le \omega_i - \omega_k}  \left[K^2_{i-k,k}+K^3_{i-k,k}\right] F_i(t) F_k(t)  + \mathcal{O}\left(\Delta \omega^3\right)\\
		& ={ \sum_{(j,k)\in \mathcal{J}_{j,k}^i} K^2_{j-k,k} F_j(t) F_k(t)} + \mathcal{O}\left(\Delta \omega^3\right)\\
		&=  \hat{Q}_i^3 \left(\mathbf{F}\right) + \mathcal{O}\left(\Delta \omega^3\right).
	\end{align*}
	
	To obtain the discretization error of the fourth term we substitute $\ds \mu = \omega-\mu' $ in the second integral of \eqref{eq_5.5} then replace $\mu'$ by $\mu$.
	\begin{align*}
		\mathcal{Q}^4_i =& -\int_{\omega_{i-1/2}}^{\omega_{i+1/2}} \int_{0}^{\omega} \left[K_2(\mu,\omega-\mu)+K_3(\mu,\omega-\mu)\right]N_\omega(t) N_{\mu}(t)\dd \mu \dd \omega \\
		& ={ F_i(t) \int_{0}^{\omega_i}\left[K_2(\mu,\omega_i-\mu)+K_3(\mu,\omega_i-\mu)\right]\sum_{k=1}^{I} F_k(t) \delta\left(\mu-\omega_k\right)\dd \mu } + \mathcal{O}\left(\Delta \omega^3\right)\\
		& = - F_i(t)\sum_{j=1}^{i-1}\int_{\omega_{j-1/2}}^{\omega_{j+1/2}} \left[K_2(\mu,\omega_i-\mu)+K_3(\mu,\omega_i-\mu)\right]\sum_{k=1}^{I} F_k(t) \delta\left(\mu-\omega_k\right)\dd \mu  \\
		&\quad- F_i(t) \int_{\omega_{i-1/2}}^{\omega_i} \left[K_2(\mu,\omega_i-\mu)+K_2(\mu,\omega_i-\mu)\right] \sum_{k=1}^{I} F_k(t) \delta\left(\mu-\omega_k\right)\dd \mu + \mathcal{O}\left(\Delta \omega^3\right) \\
		&= -\sum_{j=1}^{i-1} \left[K^2_{i-j,j}+K^3_{i-j,j}\right]F_i F_j\\
		&\quad - F_i(t) \int_{\omega_{i-1/2}}^{\omega_i} \left[K_2(\mu,\omega_i-\mu)+K_3(\mu,\omega_i-\mu)\right] \sum_{k=1}^{I} F_k(t) \delta\left(\mu-\omega_k\right)\dd \mu + \mathcal{O}\left(\Delta \omega^3\right)\\
		& = \hat{Q}_i^4 \left(\mathbf{F}\right)- E_1 + \mathcal{O}\left(\Delta \omega^3\right).
	\end{align*}
	Where
	\begin{align*}
		E_1 = F_i(t) \int_{\omega_{i-1/2}}^{\omega_i} \left[K_2(\mu,\omega_i-\mu)+K_3(\mu,\omega_i-\mu)\right] \sum_{k=1}^{I} F_k(t) \delta\left(\mu-\omega_k\right)\dd \mu.
	\end{align*}
	Now apply the right end quadrature rule on the integral of $E_1$ and using the condition \eqref{eq_10} on the wave interaction kernela $K_2$ and $K_3$. Then the integral term $E_1$ reduce to $
	E_1= \mathcal{O}\left(\Delta \omega^2\right).
	$
	Therefore,
	$
	\mathcal{Q}^4_i \left(\mathbf{F}\right)= \hat{Q}_i^4 \left(\mathbf{F}\right)+ \mathcal{O}\left(\Delta \omega^2\right).
	$
	
	For the final term, we first substitute $\ds \mu'=\omega+\mu$ on the first integral of \eqref{eq_5.6} and replace $\mu'$ by $\mu$.
	\begin{align*}
		\mathcal{Q}^5_i &= \int_{\omega_{i-1/2}}^{\omega_{i+1/2}} \int_{\omega}^{R} \left[K_2(\omega,\mu-\omega)+K_3(\omega,\mu-\omega)\right]N^n_\omega N^n_\mu \dd \mu \dd \omega \\
		& = F_i(t)\sum_{j=i+1}^{I} \int_{\omega_{j-1/2}}^{\omega_{j+1/2}}\left[K_2(\omega_i,\mu-\omega_i)+K_2(\omega_i,\mu-\omega_i)\right] \sum_{k=1}^{I} F_k(t) \delta\left(\mu-\omega_k\right)\dd \mu \\
		& \quad+ F_i(t)\int_{\omega_i}^{\omega_{i+1/2}} \left[K_2(\omega_i,\mu-\omega_i)+K_3(\omega_i,\mu-\omega_i)\right]\sum_{k=1}^{I} F_k(t) \delta\left(\mu-\omega_k\right)\dd \mu + \mathcal{O}\left(\Delta \omega^3\right) \\
		&= \sum_{j=i}^{I} \left[K^2_{j-i,i}+K^3_{j-i,i}\right]F_i(t) F_j \\
		&\quad + F_i(t)\int_{\omega_i}^{\omega_{i+1/2}} \left[K_2(\omega_i,\mu-\omega_i)+K_3(\omega_i,\mu-\omega_i)\right]\sum_{k=1}^{I} F_k(t) \delta\left(\mu-\omega_k\right)\dd \mu + \mathcal{O}\left(\Delta \omega^3\right) \\
		& = \hat{Q}_i^5 \left(\mathbf{F}\right) +E_2 + \mathcal{O}\left(\Delta \omega^3\right).
	\end{align*}
	Where
	\begin{align*}
		E_2 =  F_i(t)\int_{\omega_i}^{\omega_{i+1/2}} \left[K_2(\omega_i,\mu-\omega_i)+K_3(\omega_i,\mu-\omega_i)\right]\sum_{k=1}^{I} F_k(t) \delta\left(\mu-\omega_k\right)\dd \mu.
	\end{align*}
	Apply the right end quadrature rule on the integral of $E_1$ and using the condition \eqref{eq_10} on the wave interaction {kernels} $K_2$ and $K_3$. Then the integral term $E_1$ reduce to 
	$E_2 = \mathcal{O}\left(\Delta \omega^2\right)$.
	Finally we get the discretization error of the last term as follow
	$
	\mathcal{Q}^5_i \left(\mathbf{F}\right)= \hat{Q}_i^5 \left(\mathbf{F}\right)+ \mathcal{O}\left(\Delta \omega^2\right).
	$
	
	Now using all these obtained discretization error value in the Definition \ref{def_1}, we can obtain the spatial discretization error
	\begin{align*}
		\sigma_i = \left|J_i\left(\mathbf{F}\right)- \hat{J_i}\left({\mathbf{F}}\right)\right|= \sum_{k=1}^{5}\left|Q^k_i\left(\mathbf{F}\right)- \hat{Q}^k_i\left({\mathbf{F}}\right)\right|= \mathcal{O}\left(\Delta \omega^2\right).
	\end{align*}
	This further implies 
	$
	\|\sigma\|= \sum_{i=1}^{I}\left|\sigma_i(t)\right|=\mathcal{O}\left(\Delta \omega\right).
	$
	
	{\it Convergence:}
	To conclude the convergence proof, we combine these above results with the Proposition \ref{Prop_3}, which fulfill all the necessary conditions of the convergence analysis of \cite{linz1975convergence}. Hence, the proposed finite volume scheme \eqref{con_1} is convergent. Moreover, the scheme is first order convergent as same as the order of consistency.	
\end{proof}

\section{Numerical test}\label{Sec_4}

In this section, we consider some test problems to compare the efficiency of the newly proposed finite volume schemes \eqref{eq_6} and \eqref{eq_7} with the theoretical results of \cite{soffer2020energy}. We solve the fully nonlinear coagulation-fragmentation model using both schemes \eqref{eq_6} and \eqref{eq_7} for different initial conditions. For the test problems, we consider the collision kernels of the form $K_1(\omega,\mu)= (\omega\mu)^\theta, K_2(\omega,\mu)= (\omega\mu)^\gamma,$ and $K_3(\omega,\mu) = (\omega\mu)^\delta$, where $\theta, \gamma$ and $\delta$ are non-negative parameters referred to as the degrees of homogeneity of the collision kernels. In each test case, we either vary the degree of homogeneity of the collision kernels while holding the truncation parameter fixed at $R=100$, or we vary the truncation parameter $R$ while keeping the degree of homogeneity fixed at $\theta=\gamma=\delta=1$. Moreover, while changing the truncation parameter, we also examine the behavior of the solution under varying or fixed numbers of grid points.

In each of the test problems, the computation time domain is taken as $\ds \left[0, 1000\right]$ with the time step $\Delta t= 0.1$, for the final time $T=1000$. Moreover, all the tests were performed on a uniform grid. We observe that the choice of the step length $h$ also depends on the initial condition $N_\omega^{in}$. The first test uses a step length $h=0.5$ for the weighted scheme and $h=1$ for the finite volume scheme without any weight function. However, the non-weighted scheme also performed similarly for $h=0.5$. In the second test problem, we chose $h=0.5$ to implement our proposed FVS.

\subsection{Test case I:}\label{test_1}
In the first test problem, we consider an \emph{analytical initial condition} is given by 
\begin{align}\label{2.1}
	N_\omega^{in} = 1.25 \omega e^{-100\left(\omega-0.25\right)^2}, \qquad \omega\ge 0.
\end{align}
We begin implementing the FVS \eqref{eq_6} with the specific case where the degree of homogeneity satisfies $\theta=\gamma=\delta$. Specifically, we consider the nonlinear collision kernels given by
\begin{align}\label{2.3}
	K_1(\omega,\mu)=K_2(\omega,\mu)=K_3(\omega,\mu) = (\omega\mu)^\theta
\end{align}
and the computation domain as $ [0,100]$. With the above setup, we plot the initial state $N^{in}_\omega$ in Figure \ref{f1_1} and the final state $N_\omega(T)$ in Figure \ref{f1_2} corresponding to the collision kernels for $\theta=1$ in \eqref{2.3}. According to the theoretical results, the energy on any finite interval tends to $0$; as follows:
\begin{align}\label{2.2}
	\lim\limits_{t \to \infty} N_\omega(t) \chi_{[0,R]}(\omega) = 0.
\end{align} 
Both Figures \ref{f1_1} and \ref{f1_2} successfully verify the theoretical result \eqref{2.2} proved in \cite{soffer2020energy}.

\begin{figure}[htp]
	\begin{center}
		\begin{subfigure}{.35\textwidth}
			\centering
			\includegraphics[width=1.0\textwidth]{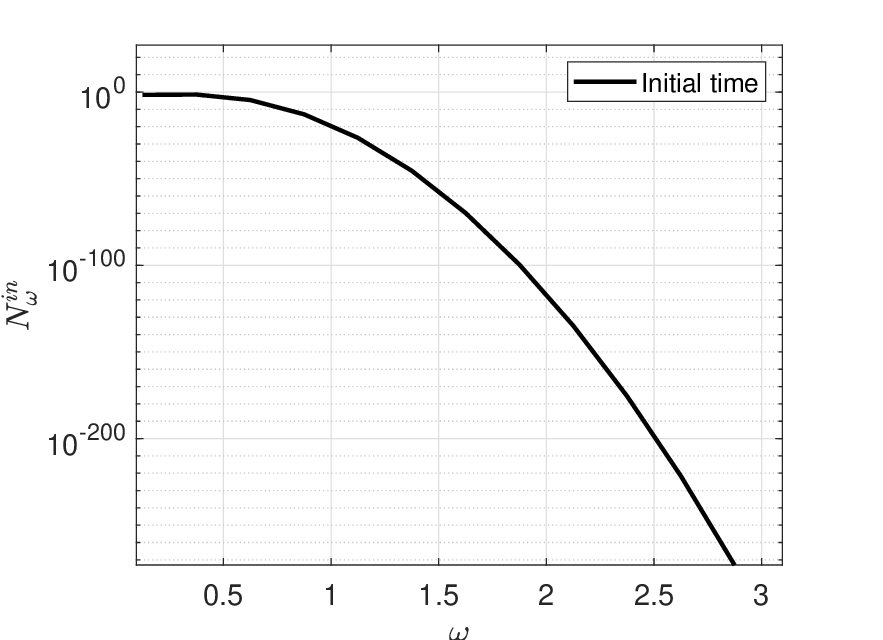}
			\caption{Initial condition}
			\label{f1_1}
		\end{subfigure}\hspace{1.2cm}
		\begin{subfigure}{.35\textwidth}
			\centering
			\includegraphics[width=1.0\textwidth]{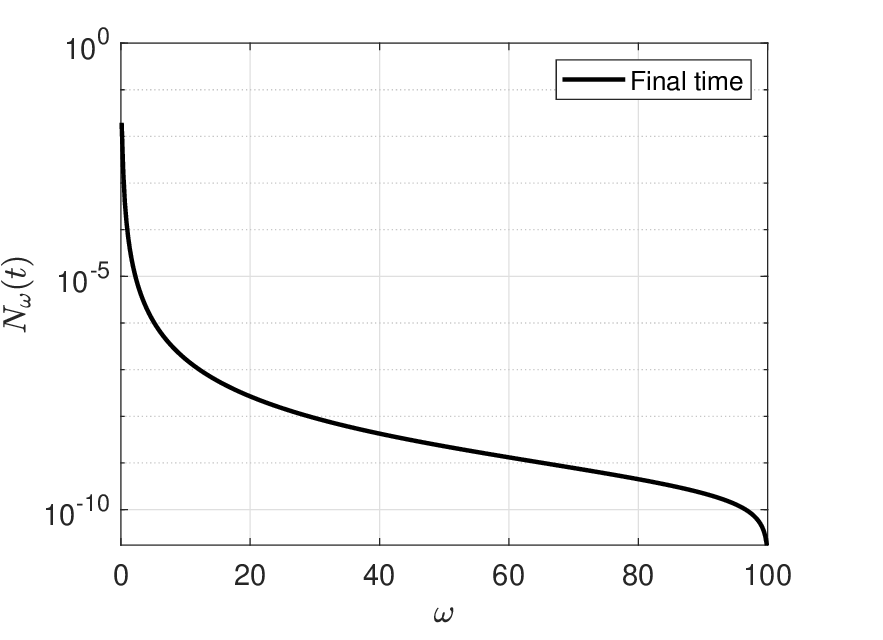}
			\caption{Final condition}
			\label{f1_2}
		\end{subfigure}
		\caption{Evolution of wave density $N_\omega$ at initial and final time.}
		\label{f1}
	\end{center}
\end{figure}

Under the consideration of the initial data \eqref{2.1} with the collision kernels \eqref{2.3}, the time evolution of the first three moments $M_1(t)$, $M_2(t)$ and $M_3(t)$ is plotted for different degrees of the collision kernel: $\theta= 1, 0.85, 0.75$. The numerical approximations in Figure \ref{f2} also support the theoretical result \eqref{2.2}, indicating that the wave density decays to $0$ with respect to the dimensionless time $t$. 
\begin{figure}[htp]
	\begin{subfigure}{.3\textwidth}
		\centering
		\includegraphics[width=1.0\textwidth]{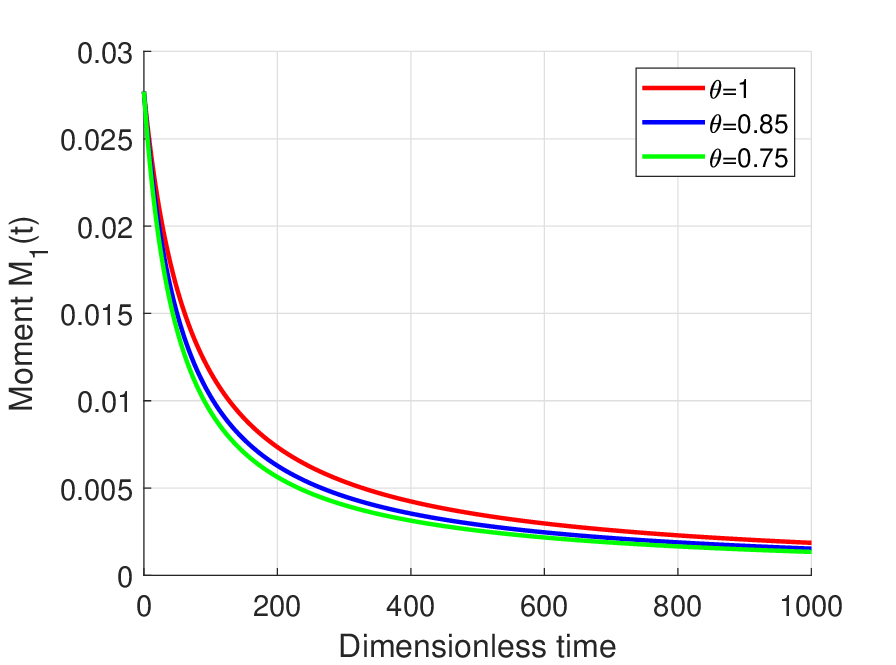}
		\caption{}
		\label{f2.1}
	\end{subfigure}
	\begin{subfigure}{.3\textwidth}
		\centering
		\includegraphics[width=1.0\textwidth]{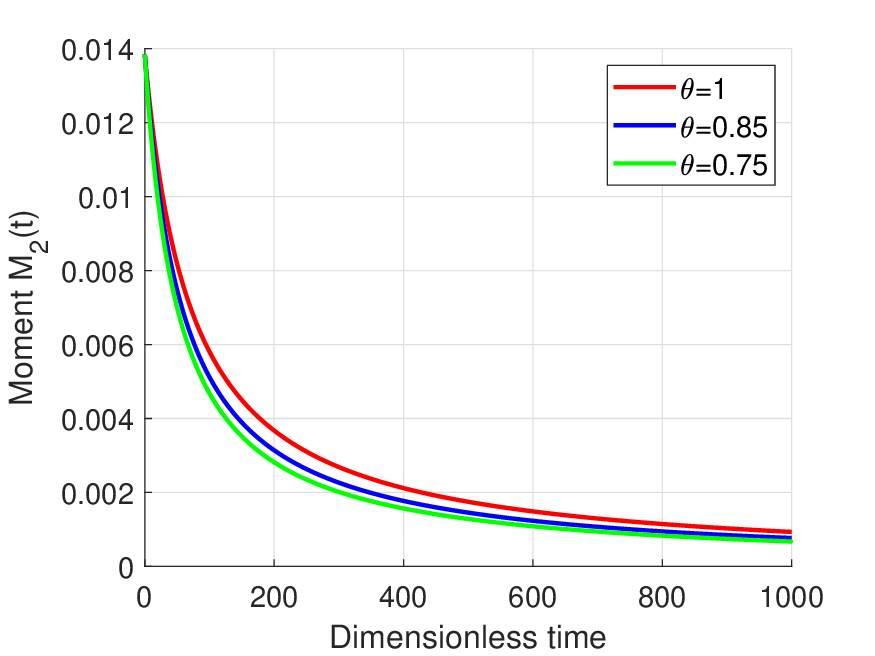}
		\caption{}
		\label{f2.2}
	\end{subfigure}
	\begin{subfigure}{.3\textwidth}
		\centering
		\includegraphics[width=1.0\textwidth]{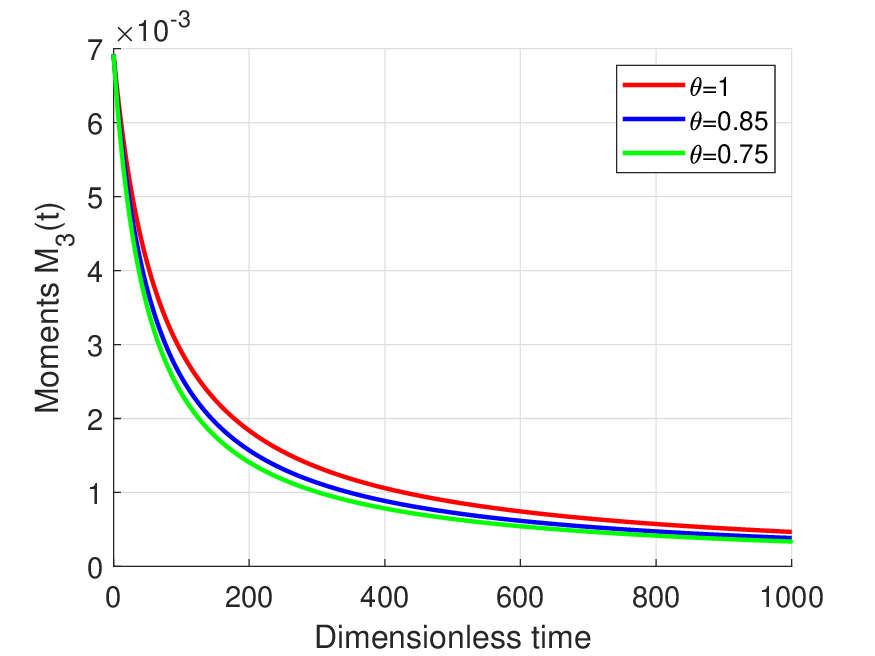}
		\caption{}
		\label{f2.3}
	\end{subfigure}
	\caption{Time evolution of the (a) first, (b) second, and (c) third moments for different degrees of homogeneity $\theta$ with the same truncation parameter $R$.}
	\label{f2}
\end{figure}

As observed from Figure \ref{f2.1}, the total energy is not conserved throughout the time domain. To investigate the theoretical decay rate of the total energy in $\left[0,\infty\right)$, Figure \ref{f3} shows the total energy for different degrees $\theta$ together with the estimation given in \eqref{1.15}.	
\begin{figure}[htp]
	\begin{center}
		\begin{subfigure}{.3\textwidth}
			\centering
			\includegraphics[width=1.0\textwidth]{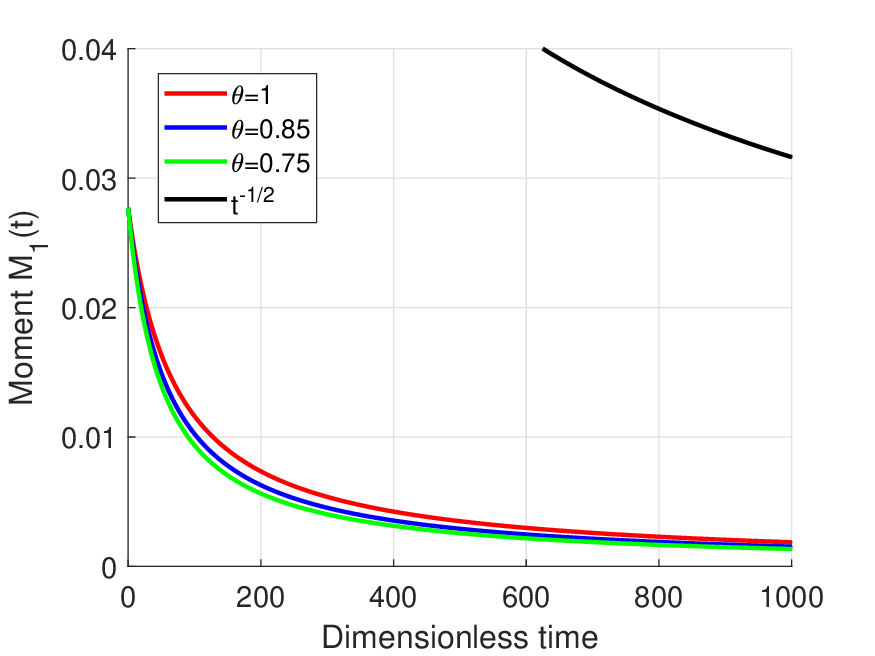}
			\caption{In linear scale}
			\label{f3_1}
		\end{subfigure}
		\begin{subfigure}{.3\textwidth}
			\centering
			\includegraphics[width=1.0\textwidth]{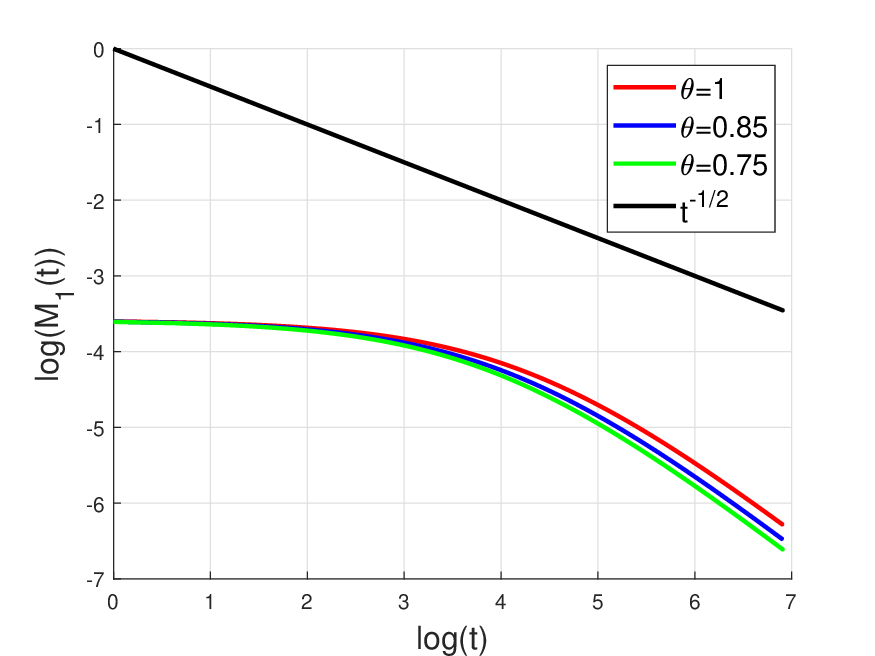}
			\caption{In logarithmic scale}
			\label{f3_2}
		\end{subfigure}
		\caption{Decay of total energy for different values of $\theta$.}
		\label{f3}
	\end{center}
\end{figure}
textcolor{blue}{Figure~\ref{f3} demonstrates that the total energy of the system cascades over time, resulting in an outflow of energy from the computational domain. Moreover, the theoretical prediction \eqref{1.15} for the decay rate of the total energy shows very good agreement with the numerical results, where the slope of the decay curve lies just below that of the reference line corresponding to \(\ds \frac{1}{\sqrt{t}}\).
}

With the same consideration of the initial data \eqref{2.1} and collision kernels \eqref{2.3}, we plot the time evolution of three different moments, $M_1(t)$, $M_2(t)$ and $M_3(t)$ for three different truncation parameters, $R=50,100$ and $120$ for a fixed degree of homogeneity $\theta=1$. However, note that in this experiment, we maintain the same mesh length by increasing the number of grid points. From Figure \ref{f4}, we can observe that the solution is not affected by the change in truncation parameter $R$ for the same value of $h$.
\begin{figure}[htp]
	\begin{subfigure}{.3\textwidth}
		\centering
		\includegraphics[width=1.0\textwidth]{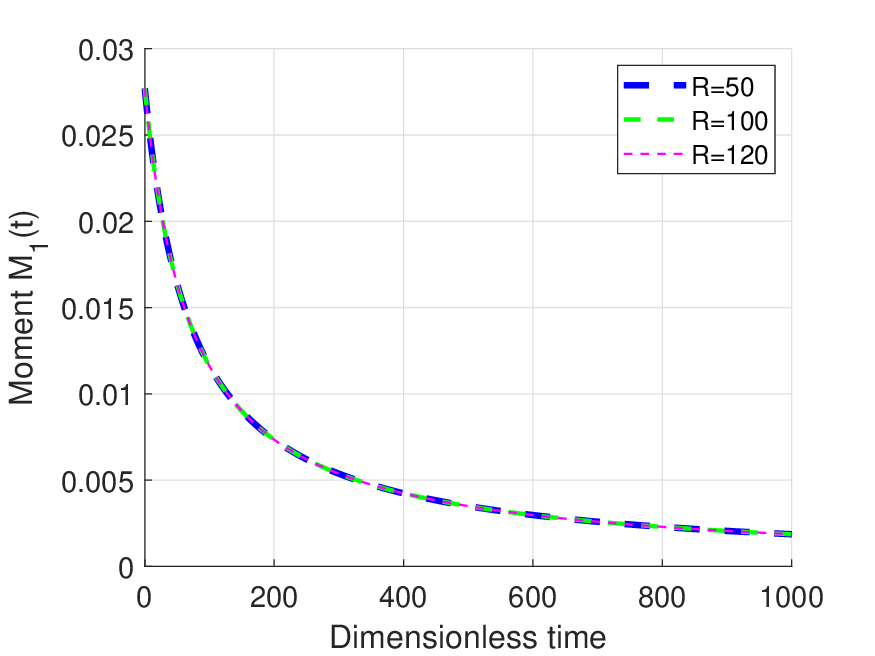}
		\caption{}
		\label{f4.1}
	\end{subfigure}
	\begin{subfigure}{.3\textwidth}
		\centering
		\includegraphics[width=1.0\textwidth]{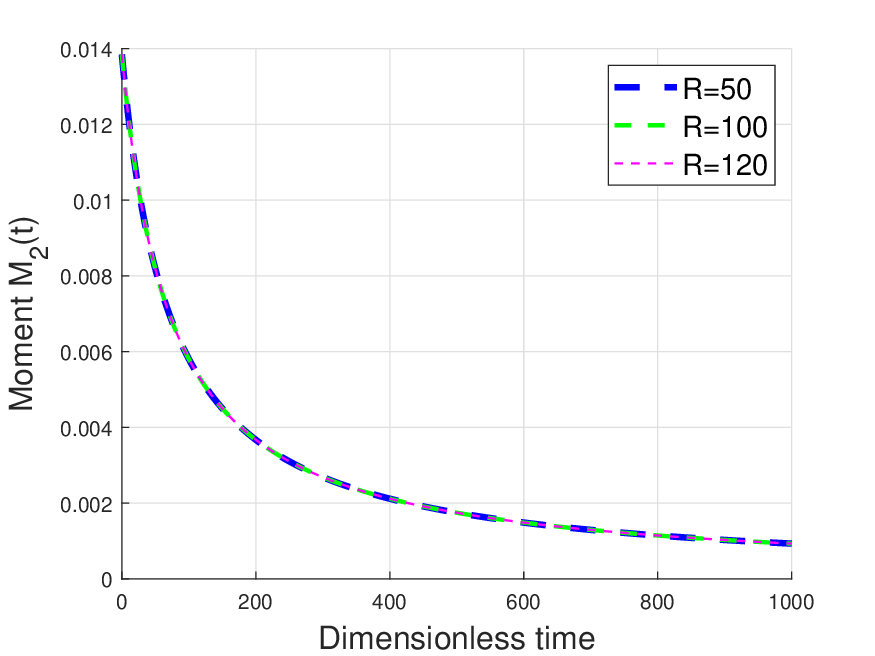}
		\caption{}
		\label{f4.2}
	\end{subfigure}
	\begin{subfigure}{.3\textwidth}
		\centering
		\includegraphics[width=1.0\textwidth]{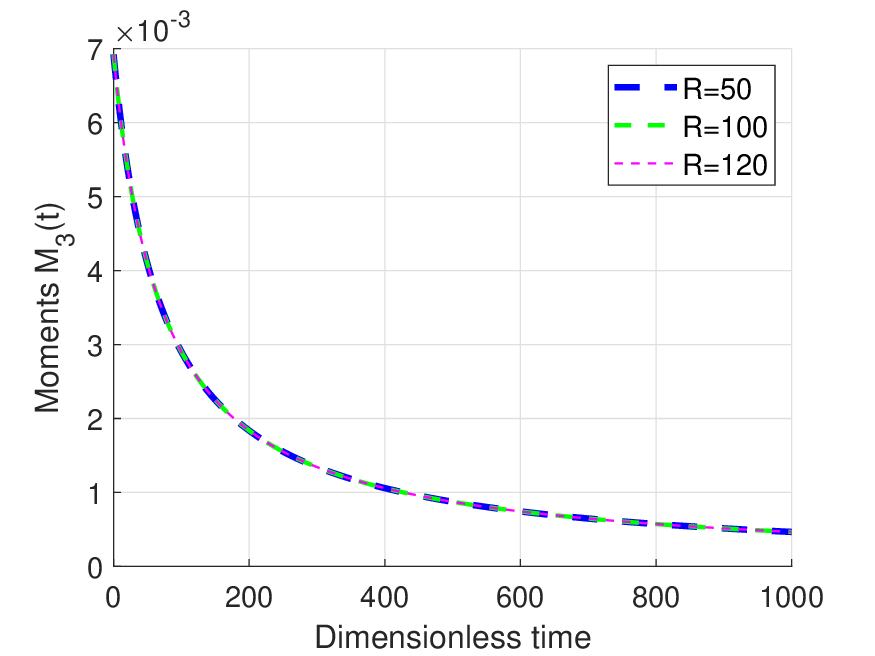}
		\caption{}
		\label{f4.3}
	\end{subfigure}
	\caption{Time evolution of the (a) first, (b) second, and (c) third moments for different truncation parameters $R$ with the same step length $h$ and degree of homogeneity $\theta$.}
	\label{f4}
\end{figure}

Now, we plot the first three moment functions for different truncation parameters while keeping the degree of the collision kernel and the number of grid points fixed. This implies that, while changing the truncation parameter, the length of the sub-intervals of the mesh is also changed. We perform this experiment using $100$ grid points. From Figures \ref{f5}, we observe that, since the length of the sub-intervals changes, the initial evolution also changes. However, after a period of time, the solutions converge to the same pattern throughout the evolution period. This phenomenon indicates that the scheme is quite stable with respect to the step length $h$.	
\begin{figure}[htp]
	\begin{subfigure}{.3\textwidth}
		\centering
		\includegraphics[width=1.0\textwidth]{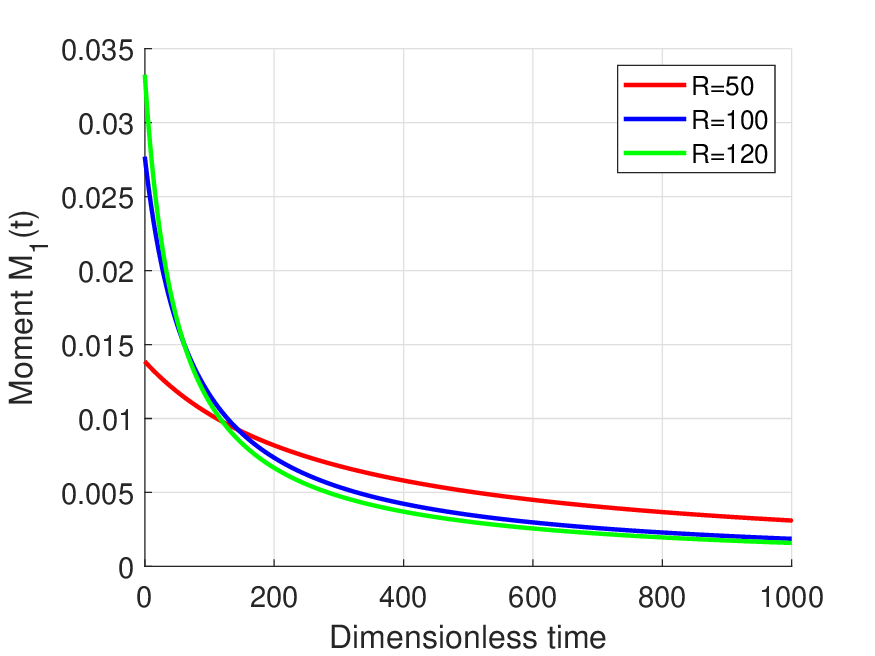}
		\caption{}
		\label{f5.1}
	\end{subfigure}
	\begin{subfigure}{.3\textwidth}
		\centering
		\includegraphics[width=1.0\textwidth]{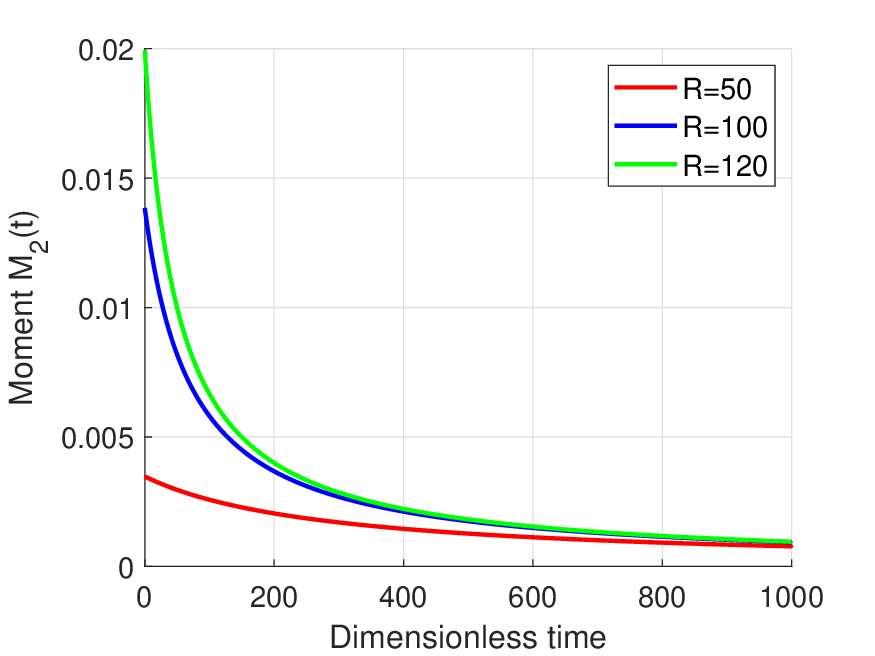}
		\caption{}
		\label{f5.2}
	\end{subfigure}
	\begin{subfigure}{.3\textwidth}
		\centering
		\includegraphics[width=1.0\textwidth]{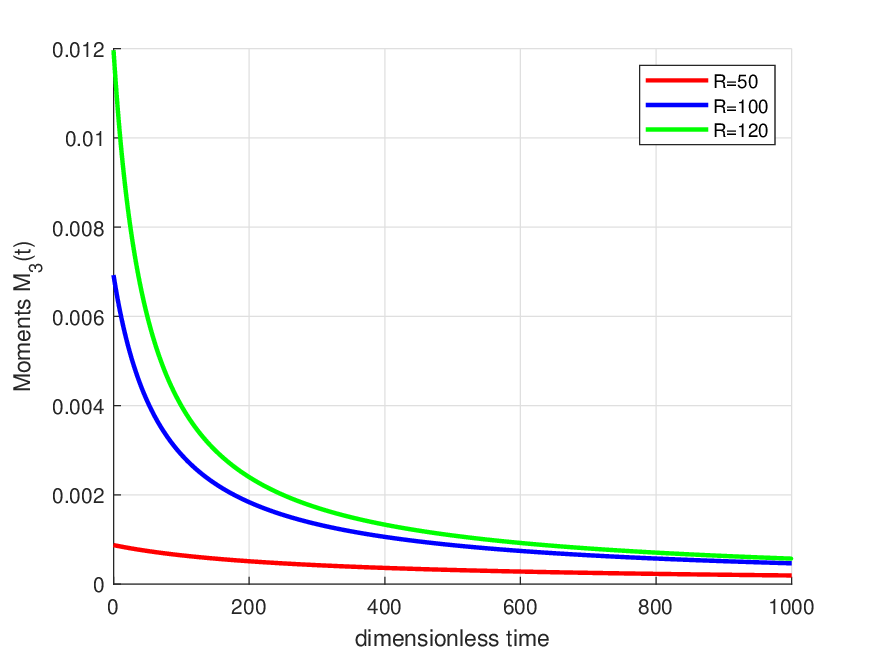}
		\caption{}
		\label{f5.3}
	\end{subfigure}
	\caption{Time evolution of the (a) first, (b) second, and (c) third moments for different truncation parameters $R$ and step lengths $h$ with the same degree of homogeneity $\theta$.}
	\label{f5}
\end{figure}

Now, we test the performance of our scheme for different collision kernels in the coagulation and fragmentation parts by choosing different degrees of homogeneity. In this regard, we choose the collision kernels as $K_1(\omega,\mu)= (\omega\mu)^\theta, K_2(\omega,\mu)= (\omega\mu)^\gamma,$ and $K_3(\omega,\mu) = (\omega\mu)^\delta$. We plot the time evolution of three different moments for various values of $\theta,\gamma$ and $\delta$ in Figure \ref{f_5}. 
\begin{figure}[htp]
	\begin{subfigure}{.3\textwidth}
		\centering
		\includegraphics[width=1.0\textwidth]{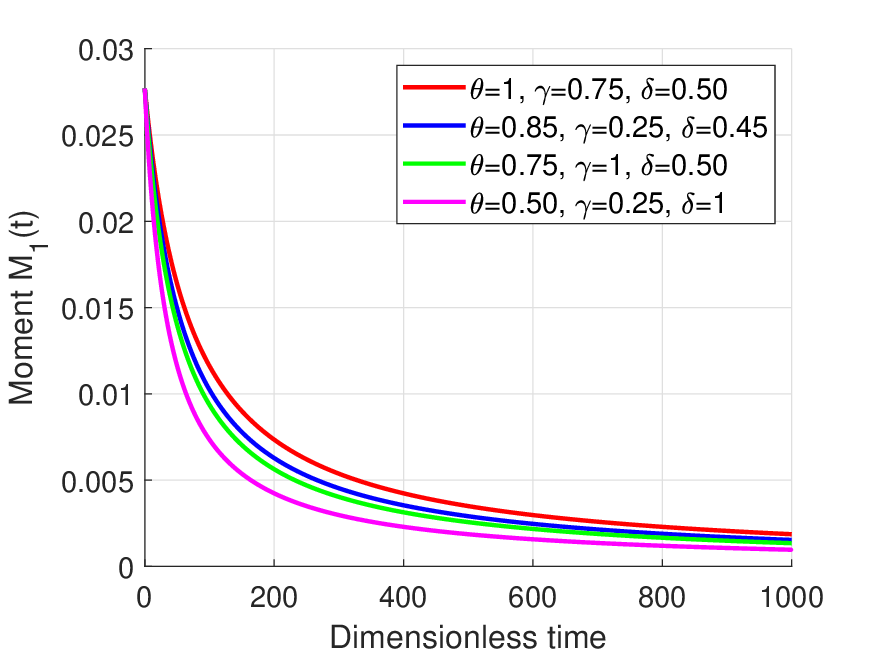}
		\caption{}
		\label{f_5.1}
	\end{subfigure}
	\begin{subfigure}{.3\textwidth}
		\centering
		\includegraphics[width=1.0\textwidth]{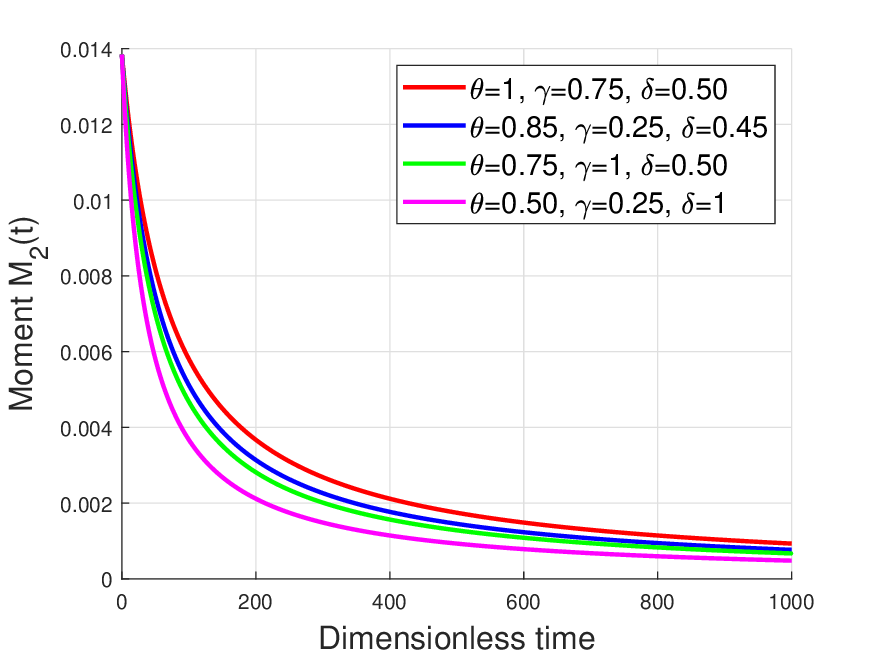}
		\caption{}
		\label{f_5.2}
	\end{subfigure}
	\begin{subfigure}{.3\textwidth}
		\centering
		\includegraphics[width=1.0\textwidth]{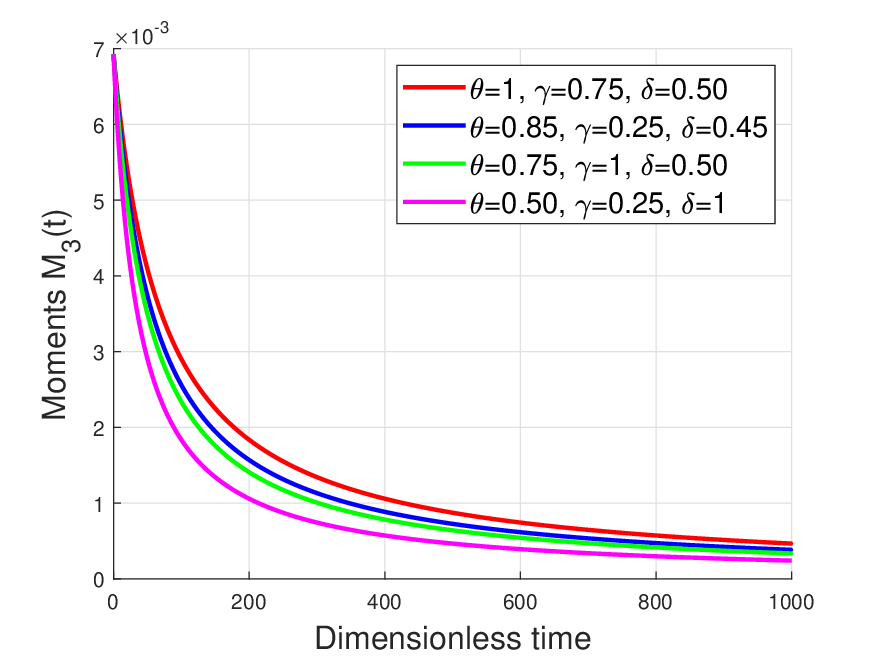}
		\caption{}
		\label{f_5.3}
	\end{subfigure}
	\caption{Time evolution of the (a) first, (b) second, and (c) third moments for different values of $\theta, \gamma$ and $\delta$ with the same truncation parameter $R$.}
	\label{f_5}
\end{figure}
To conserve the total energy, we have implemented our proposed weighted FVS \eqref{eq_7} for the same initial data \eqref{2.1} and set $\theta=\gamma=\delta=0.15,0.10, 0.05$. As discussed in the theoretical work \cite{soffer2020energy}, in this case, the energy cascade phenomenon \eqref{1.15} does not happen. Therefore, we expect to see a conservation of the energy of the numerical solutions. To observe the accuracy of the scheme \eqref{eq_7}, we plot the three moment functions $M_1(t)$, $M_2(t)$ and $M_3(t)$ for different degrees of homogeneity  with respect to the dimensionless time $t$. From Figure \ref{f7_1}, we can see that the first-order moment remains constant throughout the time domain. This implies that our proposed scheme \eqref{eq_7} successfully conserves energy for a quite large time domain.  From Figures \ref{f7_2} and \ref{f7_3}, we can observe that the second- and third-order moment functions $M_2(t)$, $M_3(t)$ show a tendency to stabilize after a short period of time.
\begin{figure}[htp]
	\begin{subfigure}{.3\textwidth}
		\centering
		\includegraphics[width=1.0\textwidth]{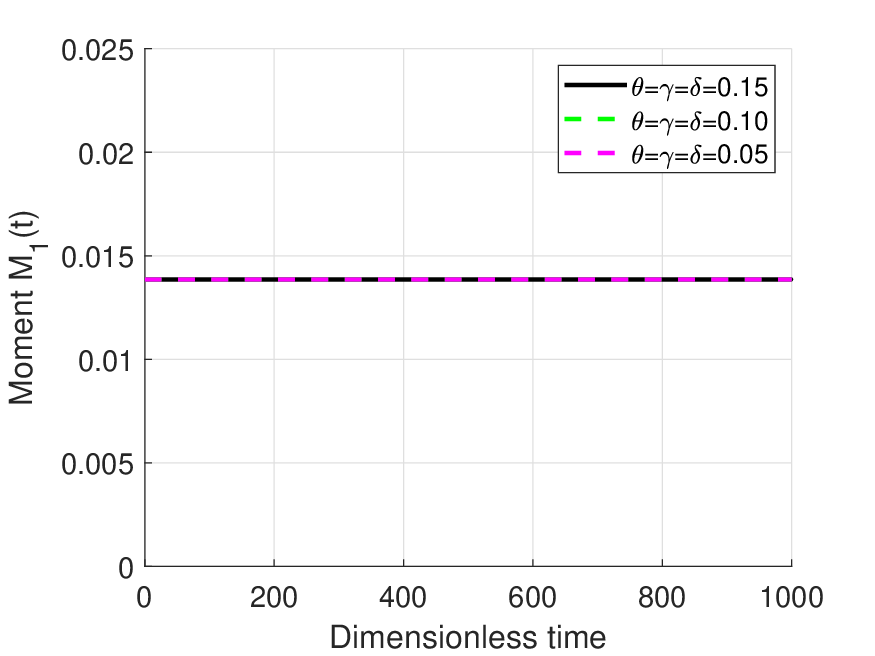}
		\caption{}
		\label{f7_1}
	\end{subfigure}
	\begin{subfigure}{.3\textwidth}
		\centering
		\includegraphics[width=1.0\textwidth]{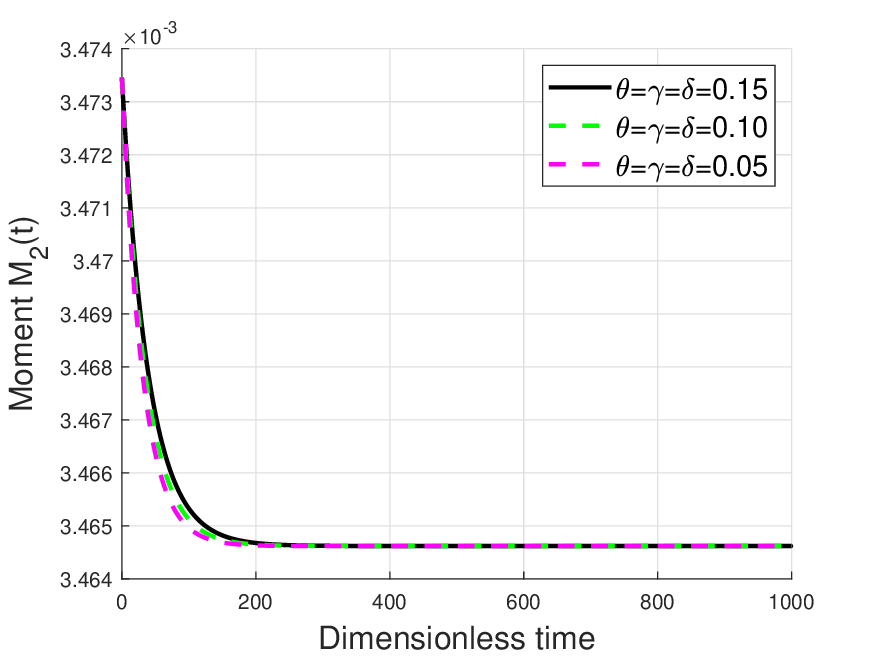}
		\caption{}
		\label{f7_2}
	\end{subfigure}
	\begin{subfigure}{.3\textwidth}
		\centering
		\includegraphics[width=1.0\textwidth]{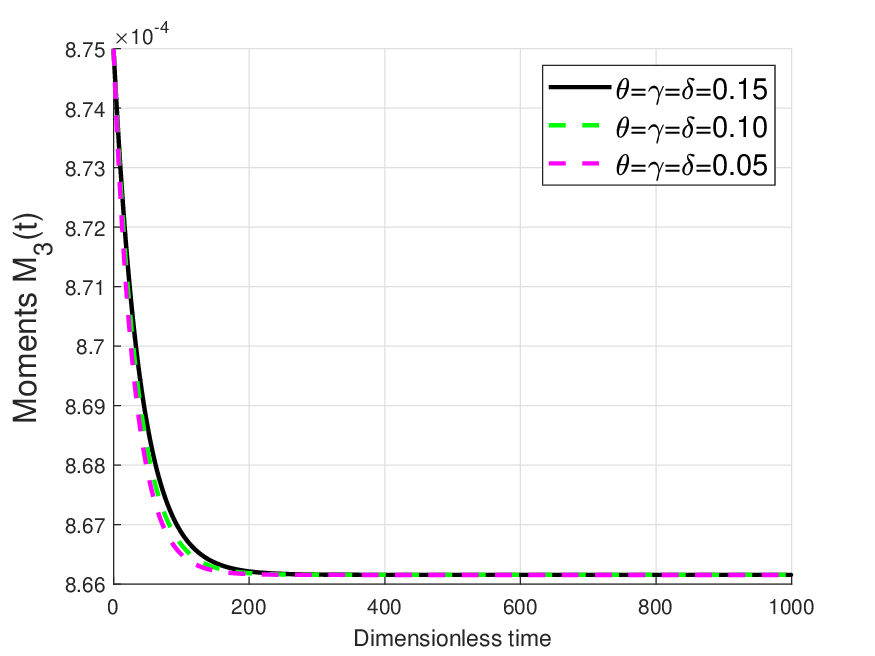}
		\caption{}
		\label{f7_3}
	\end{subfigure}
	\caption{Time evolution of the (a) first, (b) second, and (c) third moments for different values of $\theta$ with the same truncation parameter $R$.}
	\label{f7}
\end{figure}

\subsection{Test case II:}\label{test_3} The second numerical test employs an initial condition that is compactly supported at lower frequencies and is given by
\begin{align}\label{2.4}
	N_\omega^{in} = \left\{\begin{array}{ll}
		\exp\left(\frac{5}{\left|\omega-5\right|^2-1}\right), &\mbox{if}\quad\left|\omega-5\right|\le 1 \vspace{0.2cm}\\
		0 , &\mbox{if}\quad\left|\omega-5\right|>1.
	\end{array}\right.
\end{align}
To compute the solution, we consider the step length of the mesh points to be $h=0.5$ with the computation domain $[0,70]$. The initial condition and final state can be seen in Figure \ref{f13} for the collision kernels \eqref{2.3}. In Figures \ref{f13_1} and \ref{f13_2}, we observe that the energy is pushed slightly toward the origin at some $T_s \in \left[0,T\right)$ to $\omega= 0.2512$, away from its initial concentration at $\omega=4.7738$ at $t=0$.
\begin{figure}[htp]
	\begin{center}
		\begin{subfigure}{.3\textwidth}
			\centering
			\includegraphics[width=1.0\textwidth]{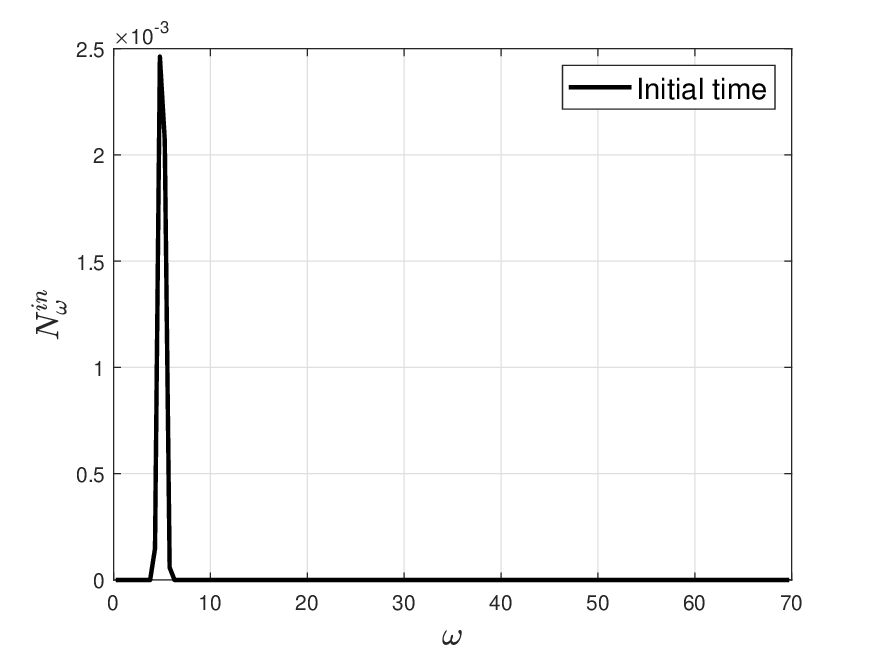}
			\caption{Initial Condition}
			\label{f13_1}
		\end{subfigure}
		\begin{subfigure}{.3\textwidth}
			\centering
			\includegraphics[width=1.0\textwidth]{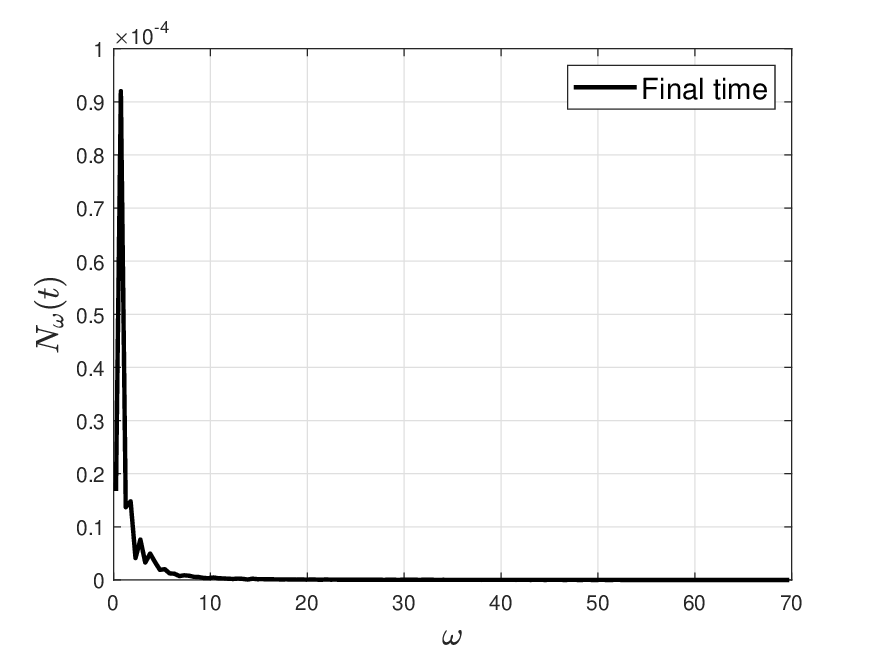}
			\caption{Final Condition}
			\label{f13_2}
		\end{subfigure}
		\caption{Evolution of wave density  $N_\omega$ using the FVS \eqref{eq_6} at the initial and final times.}
		\label{f13}
	\end{center}
\end{figure}

In Figure \ref{f14}, we have plotted the first three moment functions while allowing the degree of homogeneity of the collision kernels \eqref{2.3} to vary, while holding the truncation parameter fixed. We observe the decaying behavior of the total energy, as well as the other moments, similar to that in the previous test case.
\begin{figure}[htp]
	\begin{subfigure}{.3\textwidth}
		\centering
		\includegraphics[width=1.0\textwidth]{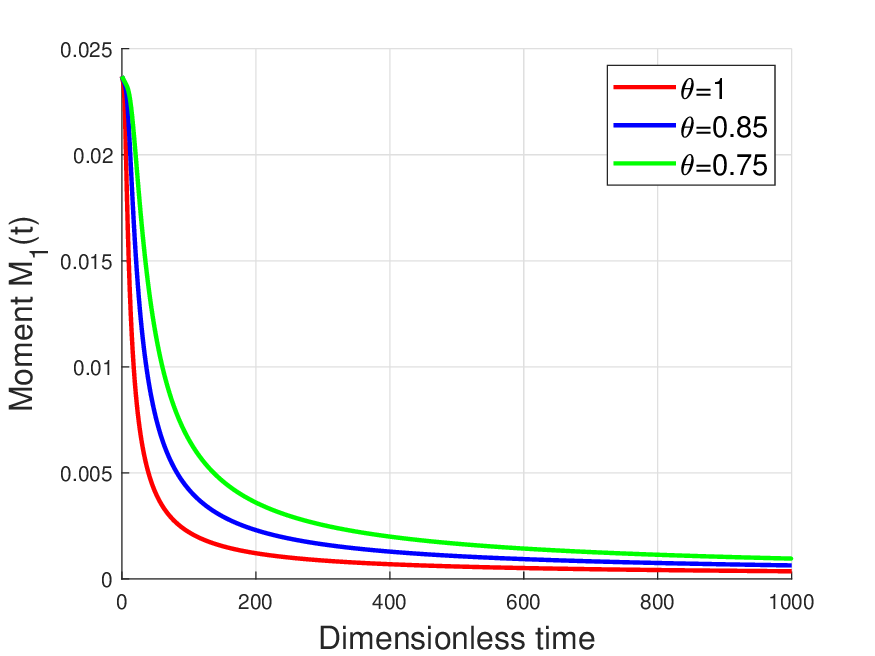}
		\caption{}
		\label{f14_1}
	\end{subfigure}
	\begin{subfigure}{.3\textwidth}
		\centering
		\includegraphics[width=1.0\textwidth]{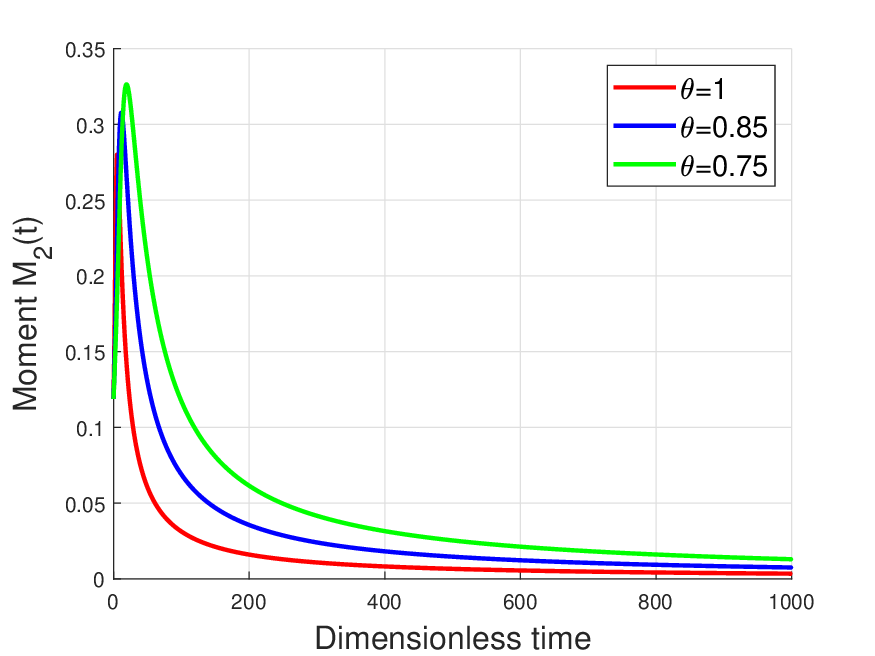}
		\caption{}
		\label{f14_2}
	\end{subfigure}
	\begin{subfigure}{.3\textwidth}
		\centering
		\includegraphics[width=1.0\textwidth]{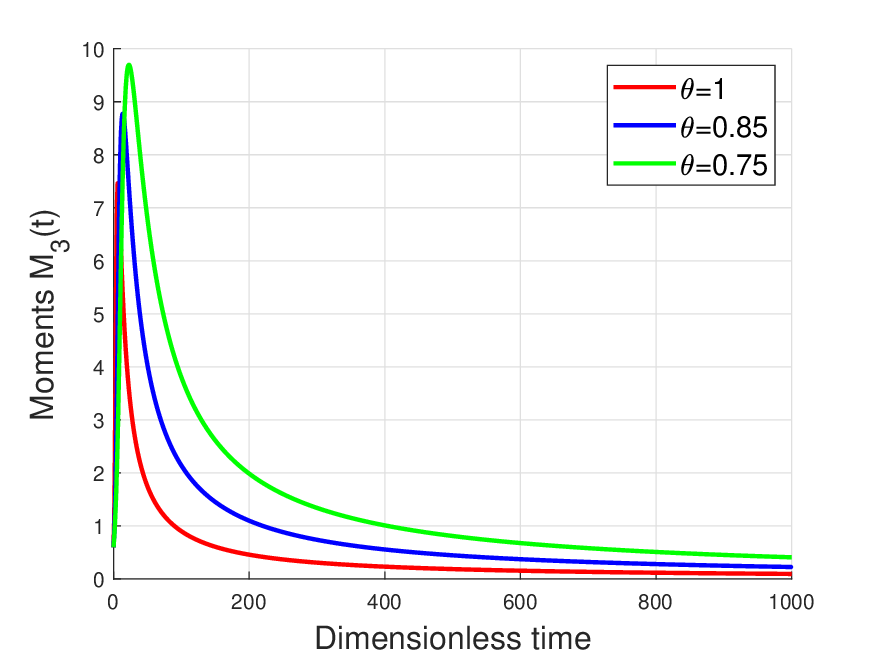}
		\caption{}
		\label{f14_3}
	\end{subfigure}
	\caption{Time evolution of the (a) first, (b) second, and (c) third moments for different values of $\theta$ with the same truncation parameter $R$.}
	\label{f14}
\end{figure}

In Figure \ref{f15}, the theoretical decay rate is compared with the decay of total energy for all considered values of $\theta$. The plot indicates that the total energy in the finite interval considered is conserved for a short time, as given in \cite{soffer2020energy}. After this time, the total energy in the interval begins to decay rapidly. As in the previous test cases, the numerical results show good agreement with the theoretical findings of \cite{soffer2020energy}. The figure suggests that the decay is bounded by $\mathcal{O}\left(\frac{1}{\sqrt{t}}\right)$, as shown in \eqref{1.15}.
\begin{figure}[htp]
	\begin{center}
		\begin{subfigure}{.3\textwidth}
			\centering
			\includegraphics[width=1.0\textwidth]{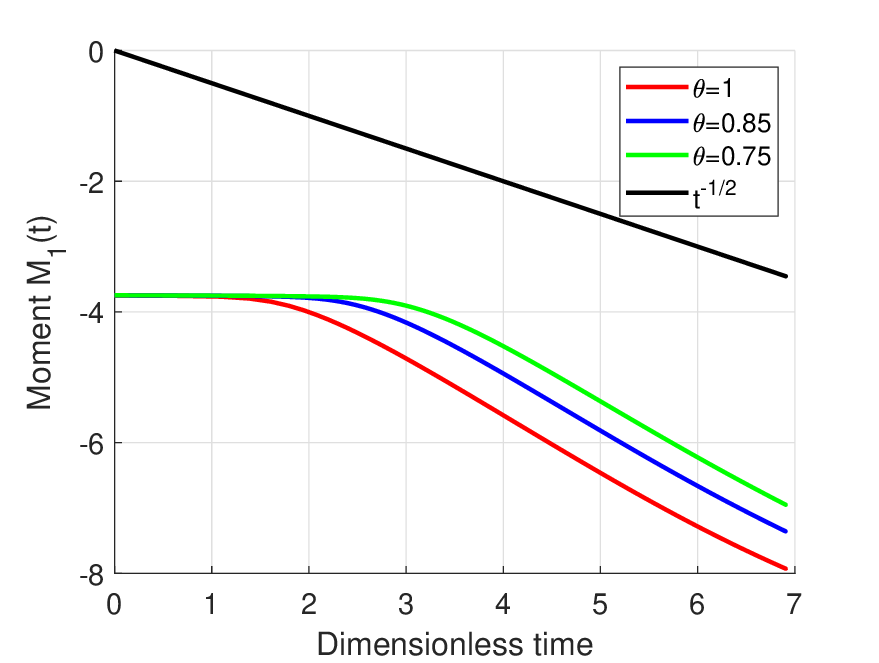}
			\caption{ }
			\label{f15_1}
		\end{subfigure}
		\caption{{Decay of total energy for different values of $\theta$ on a logarithmic scale.}}
		\label{f15}
	\end{center}
\end{figure}

We also perform the evolution of the moment functions in Figure \ref{f16}, when $\theta=1$ is fixed and the truncation parameter $R$ is varying.  In contrast to Figure \ref{f4} in the previous test case, the differences in moments are more distinguishable. However, as already mentioned, this is consistent with the previous analysis in \cite{soffer2020energy}.
\begin{figure}[htp]
	\begin{subfigure}{.3\textwidth}
		\centering
		\includegraphics[width=1.0\textwidth]{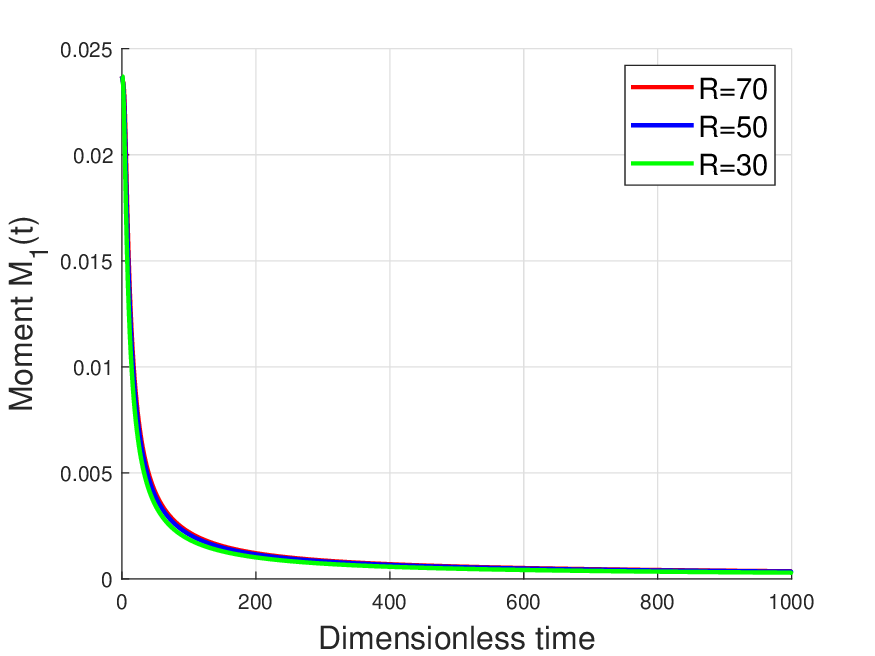}
		\caption{}
		\label{f16_1}
	\end{subfigure}
	\begin{subfigure}{.3\textwidth}
		\centering
		\includegraphics[width=1.0\textwidth]{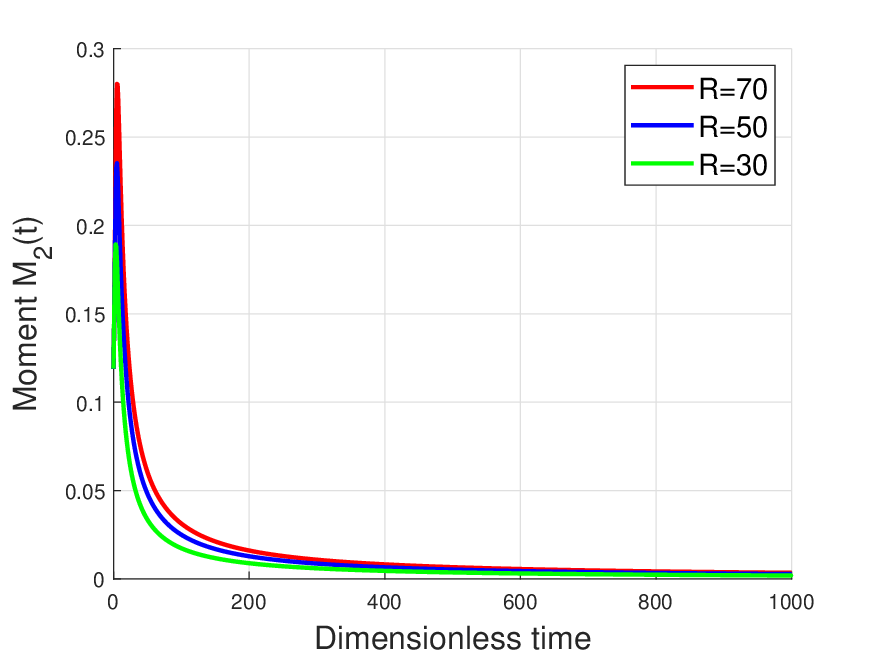}
		\caption{}
		\label{f16_2}
	\end{subfigure}
	\begin{subfigure}{.3\textwidth}
		\centering
		\includegraphics[width=1.0\textwidth]{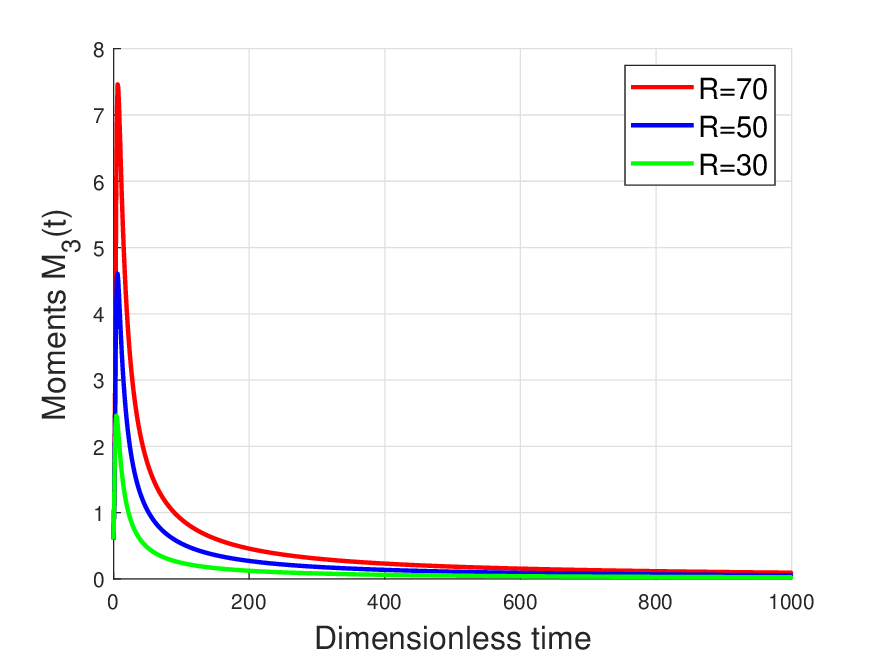}
		\caption{}
		\label{f16_3}
	\end{subfigure}
	\caption{Time evolution of the (a) first, (b) second, and (c) third moments for different truncation parameter $R$ with the same degree of homogeneity $\theta$.}
	\label{f16}
\end{figure}

\subsection{Experimental order of convergence}\label{EOC}
To validate the theoretical result on the order of convergence proved in the previous section, we conduct several numerical test problems to compute the \emph{Experimental Order of Convergence} (EOC). Specifically, we calculate the EOC for the first test case (\ref{test_1}) and second test case (\ref{test_3}), both involving wave interaction kernels with a homogeneity degree of one. Since the exact solution for the  fully nonlinear coagulation-fragmentation model equation \eqref{1.1} is not available in the literature for either test case, we compute the EOC numerically using the following formula:

\begin{align*}
	EOC = \ln\left(\frac{\|N_I-N_{2I}\|}{\|N_{2I}-N_{4I}\|}\right)/\ln(2).
\end{align*}
Here $\ds N_I$ denotes the approximate solution obtained by the FVS on a mesh with $I$ number of grid points. We perform both test cases on uniform and nonuniform grids, using a geometric grid for the nonuniform case. The geometric grid is derived from the smooth transformation $\ds \omega = \exp(\xi)$, where $\xi$ is a variable with a uniform mesh in {the computational domain $\left[\ln(1e-8), \ln(2)\right]$ and $\left[\ln(1e-8), \ln(10)\right]$ for the first and second test case}, respectively. Additionally, the simulations are run till time $t=200$ and $t = 50$ with time step $\Delta t =0.1$, for the Test case \ref{test_1} and Test case \ref{test_3}, respectively.

The table \ref{t1} provides information of the $L^1$ error and EOC for the initial data considered in the Test case \ref{test_1}.  From this we can observe that our proposed FVS has first order convergence on uniform as well as nonuniform grid. Which is a good agreement with the theoretical proof of convergence analysis stated in Theorem \ref{thm_3}. 

\begin{table}[!htp]
	\centering
	\begin{tabular}{ |c| c|c| c |c|}
		\hline
		& \multicolumn{1}{c}{ \hspace{1cm}Uniform grids} & & \multicolumn{1}{c}{\hspace{1cm}Nonuniform grids}& \\
		\hline
		Grid points & $L^1$ error  & EOC & $L^1$ error  & EOC  \\
		\hline
		\multirow {5}{2em}{}
		\hspace{-1cm} 60 & 9.72E-4 & 0      & 6.14E-3 & 0       \\
		120 & 4.54E-4 & 1.0972 & 2.61E-3 & 1.2333  \\
		240 & 2.37E-4 & 0.9366 & 1.18E-3 & 1.1401  \\
		480 & 1.22E-4 & 0.9571 & 3.63E-4 & 1.0038  \\
		\hline
	\end{tabular}
	\caption{Error and EOC for the Test case \ref{test_1}.}\label{t1}
	\begin{tabular}{ |c| c|c| c |c|}
		\hline
		& \multicolumn{1}{c}{ \hspace{1cm}Uniform grids} & & \multicolumn{1}{c}{\hspace{1cm}Nonuniform grids}& \\
		\hline
		Grid points & $L^1$ error  & EOC & $L^1$ error  & EOC  \\
		\hline
		\multirow {5}{2em}{}
		\hspace{-1cm} 60 & 3.44E-5 & 0   & 4.96E-4 & 0   \\
		120 & 1.95E-5 & 0.8205 & 4.95E-4 & 0.1130  \\
		240 & 1.04E-5 & 0.9107 & 3.09E-4 & 0.6784  \\
		480 & 5.36E-6 & 0.9478 & 1.61E-4 & 0.9377  \\
		\hline
	\end{tabular}
	\caption{Error and EOC for the Test case \ref{test_3}.}\label{t2}
\end{table}
In the second test case, we compute the $L^1$ error and EOC for the initial data considered in Test Case \ref{test_3}. The errors and EOC values for both uniform and nonuniform meshes are presented in Table \ref{t2}. From the results, it can be observed that, similar to the first test case, the second test case also demonstrates first-order convergence of the FVS. This further verifies the theoretical result under favorable conditions.

\section{Conclusions and further discussion}
This article presented a novel finite volume scheme designed to solve a fully nonlinear coagulation-fragmentation model derived from the $3$-wave kinetic equation. Notably, unlike the methods described in \cite{connaughton2010dynamical}, the proposed scheme is the first numerical approach capable of capturing the long-term asymptotic behavior of solutions to the $3$-wave turbulence kinetic equation without imposing additional restrictions.

Through numerical tests, the scheme successfully validates the energy cascade phenomenon for various degrees of homogeneity in the collision kernels. These results demonstrate strong agreement with the theoretical findings in \cite{soffer2020energy}. Figures \ref{f3} and \ref{f15} further confirm the theoretical predictions regarding the energy decay rate, as described by:
\begin{align*}
	\begin{split}
		\int_{0}^{R}\omega N_\omega(t) \dd \omega = \int_{\mathbb{R}_+}\chi_{[0,R]}(\omega) \omega N_\omega \dd \omega \le \mathcal{O}\left(\frac{1}{\sqrt{t}}\right),\quad \text{ as } t \longrightarrow\infty,
	\end{split}
\end{align*}
for any truncation parameter $R$. Additionally, Figure \ref{f_5} illustrates the scheme's ability to describe the energy cascade phenomenon across different degrees of homogeneity for the collision kernels. To address energy conservation for varying kernel homogeneity, a weighted finite volume scheme was also introduced. Figure \ref{f7_1} confirms the numerical validation of energy conservation for varying degree of homogeneity of collision kernels. To provide a solid theoretical foundation for our proposed finite volume scheme, we conducted a detailed convergence analysis, demonstrating first-order consistency. In addition to this theoretical validation, the first-order convergence was confirmed through the computation of the experimental order of convergence, as presented in Tables \ref{t1} and \ref{t2}.

%%%%%%%%%%%%%%% End of first page %%%%%%%%%%%%%%%%%%%%%

\enlargethispage{20pt}

\section{Acknowledgment}We would like to express our gratitude to Dr. Iulia Cristian, Dr. Steven Walton, Professor Avy Soffer and Professor Enrique Zuazua for fruitful discussions on the topics. A. D. expresses gratitude to the Chair for Dynamics, Control, Machine Learning, and Numerics in the Department of Mathematics at FAU Erlangen-Nurnberg for their hospitality, where the work was conducted. M.-B. T is  funded in part by  a   Humboldt Fellowship,   NSF CAREER  DMS-2303146, and NSF Grants DMS-2204795, DMS-2305523,  DMS-2306379.

%%%%%%%%%% Insert bibliography here %%%%%%%%%%%%%%

\vskip2pc

%\begin{thebibliography}{9}

%\bibitem{1} Allwood JM, Cullen JM. 2011 \textit{Sustainable materials:  with both eyes open}.
%Cambridge, UK: UIT Cambridge. See \href{http://www.withbotheyesopen.com}{http://www.withbotheyesopen.com}.
%
%\bibitem{2}  MacKay DJC. 2008  \textit{Sustainable energy:  without the hot air}.
% Cambridge, UK: UIT Cambridge. See \href{http://www.withouthotair.com}{http://www.withouthotair.com}.
%
%\bibitem{3} Gallman PG. 2011  \textit{Green alternatives and national energy strategy: the facts
	% behind the headlines}.  Baltimore,\ MD: Johns Hopkins University Press.
%
%\bibitem{4} MacKay DJC. 2013.  Solar energy in the context of energy use, energy transportation, and
% energy storage. \textit{Proc. R. Soc. A} \textbf{371}.

%\end{thebibliography}

\bibliographystyle{elsarticle-num} %%%% .BST file

\bibliography{Wave} %%%%% .Bib file

\begin{thebibliography}{10}
\expandafter\ifx\csname url\endcsname\relax
  \def\url#1{\texttt{#1}}\fi
\expandafter\ifx\csname urlprefix\endcsname\relax\def\urlprefix{URL }\fi
\expandafter\ifx\csname href\endcsname\relax
  \def\href#1#2{#2} \def\path#1{#1}\fi

\bibitem{soffer2020energy}
A.~Soffer, M.-B. Tran, On the energy cascade of 3-wave kinetic equations:
  beyond kolmogorov--zakharov solutions, Communications in Mathematical Physics
  376~(3) (2020) 2229--2276.

\bibitem{peierls1929kinetischen}
R.~Peierls, Zur kinetischen theorie der w{\"a}rmeleitung in kristallen, Annalen
  der Physik 395~(8) (1929) 1055--1101.

\bibitem{benney1966nonlinear}
D.~Benney, P.~G. Saffman, Nonlinear interactions of random waves in a
  dispersive medium, Proceedings of the Royal Society of London. Series A.
  Mathematical and Physical Sciences 289~(1418) (1966) 301--320.

\bibitem{zakharov1967weak}
V.~Zakharov, N.~Filonenko, Weak turbulence of capillary waves, Journal of
  applied mechanics and technical physics 8~(5) (1967) 37--40.

\bibitem{benney1969random}
D.~Benney, A.~C. Newell, Random wave closures, Studies in Applied Mathematics
  48~(1) (1969) 29--53.

\bibitem{hasselmann1962non}
K.~Hasselmann, On the non-linear energy transfer in a gravity-wave spectrum
  part 1. general theory, Journal of Fluid Mechanics 12~(4) (1962) 481--500.

\bibitem{hasselmann1974spectral}
K.~Hasselmann, On the spectral dissipation of ocean waves due to white capping,
  Boundary-Layer Meteorology 6 (1974) 107--127.

\bibitem{nazarenko2011wave}
S.~Nazarenko, Wave turbulence, Vol. 825, Springer, 2011.

\bibitem{pomeau2019statistical}
Y.~Pomeau, M.-B. Tran, Statistical physics of non equilibrium quantum
  phenomena, Springer, 2019.

\bibitem{zakharov2012kolmogorov}
V.~E. Zakharov, V.~S. L'vov, G.~Falkovich, Kolmogorov spectra of turbulence I:
  Wave turbulence, Springer Science \& Business Media, 2012.

\bibitem{banasiak2019analytic}
J.~Banasiak, W.~Lamb, P.~Lauren{\c{c}}ot, Analytic Methods for
  Coagulation-Fragmentation Models, Volume I, Chapman and Hall/CRC, 2019.

\bibitem{canizo2010regularity}
J.~A. Ca{\~n}izo, L.~Desvillettes, K.~Fellner, Regularity and mass conservation
  for discrete coagulation--fragmentation equations with diffusion, in: Annales
  de l'IHP Analyse non lin{\'e}aire, Vol.~27, 2010, pp. 639--654.

\bibitem{cristian2023long}
I.~Cristian, M.~A. Ferreira, E.~Franco, J.~J. Vel{\'a}zquez, Long-time
  asymptotics for coagulation equations with injection that do not have
  stationary solutions, Archive for Rational Mechanics and Analysis 247~(6)
  (2023) 103.

\bibitem{cristian2024fast}
I.~Cristian, J.~J. Vel{\'a}zquez, Fast fusion in a two-dimensional coagulation
  model, Journal de Math{\'e}matiques Pures et Appliqu{\'e}es 184 (2024)
  91--117.

\bibitem{degond2017coagulation}
P.~Degond, J.-G. Liu, R.~L. Pego, Coagulation--fragmentation model for animal
  group-size statistics, Journal of Nonlinear Science 27 (2017) 379--424.

\bibitem{escobedo2003gelation}
M.~Escobedo, P.~Lauren{\c{c}}ot, S.~Mischler, B.~Perthame, Gelation and mass
  conservation in coagulation-fragmentation models, Journal of Differential
  Equations 195~(1) (2003) 143--174.

\bibitem{liu2019high}
H.~Liu, R.~Gropler, G.~Warnecke, A high order positivity preserving dg method
  for coagulation-fragmentation equations, SIAM Journal on Scientific Computing
  41~(3) (2019) B448--B465.

\bibitem{jang2024discrete}
J.~Jang, H.~V. Tran, Discrete coagulation-fragmentation equations with
  multiplicative coagulation kernel and constant fragmentation kernel, arXiv
  preprint arXiv:2409.17974 (2024).

\bibitem{menon2008scaling}
G.~Menon, R.~L. Pego, The scaling attractor and ultimate dynamics for
  smoluchowski’s coagulation equations, Journal of Nonlinear Science 18
  (2008) 143--190.

\bibitem{perthame2005exponential}
B.~Perthame, L.~Ryzhik, Exponential decay for the fragmentation or
  cell-division equation, Journal of Differential equations 210~(1) (2005)
  155--177.

\bibitem{stewart1989global}
I.~Stewart, E.~Meister, A global existence theorem for the general
  coagulation--fragmentation equation with unbounded kernels, Mathematical
  Methods in the Applied Sciences 11~(5) (1989) 627--648.

\bibitem{tran2022coagulation}
H.~V. Tran, T.-S. Van, Coagulation-fragmentation equations with multiplicative
  coagulation kernel and constant fragmentation kernel, Communications on Pure
  and Applied Mathematics 75~(6) (2022) 1292--1331.

\bibitem{connaughton2010dynamical}
C.~Connaughton, A.~C. Newell, Dynamical scaling and the finite-capacity anomaly
  in three-wave turbulence, Physical Review E—Statistical, Nonlinear, and
  Soft Matter Physics 81~(3) (2010) 036303.

\bibitem{CraciunBinh}
G.~Craciun, M.-B. Tran, A reaction network approach to the convergence to
  equilibrium of quantum boltzmann equations for bose gases, ESAIM: Control,
  Optimisation and Calculus of Variations 27 (2021) 83.

\bibitem{EscobedoBinh}
M.~Escobedo, M.-B. Tran, Convergence to equilibrium of a linearized quantum
  boltzmann equation for bosons at very low temperature, Kinetic and Related
  Models 8~(3) (2015) 493--531.

\bibitem{GambaSmithBinh}
I.~M. Gamba, L.~M. Smith, M.-B. Tran, On the wave turbulence theory for
  stratified flows in the ocean, Mathematical Models and Methods in Applied
  Sciences 30~(01) (2020) 105--137.

\bibitem{tran2020reaction}
M.-B. Tran, G.~Craciun, L.~M. Smith, S.~Boldyrev, A reaction network approach
  to the theory of acoustic wave turbulence, Journal of Differential Equations
  269~(5) (2020) 4332--4352.

\bibitem{nguyen2017quantum}
T.~T. Nguyen, M.-B. Tran, On the kinetic equation in zakharov's wave turbulence
  theory for capillary waves, SIAM Journal on Mathematical Analysis 50~(2)
  (2018) 2020--2047.

\bibitem{walton2022deep}
S.~Walton, M.-B. Tran, A.~Bensoussan, A deep learning approximation of
  non-stationary solutions to wave kinetic equations, Applied Numerical
  Mathematics (2022).

\bibitem{rumpf2021wave}
B.~Rumpf, A.~Soffer, M.-B. Tran, On the wave turbulence theory: ergodicity for
  the elastic beam wave equation, Mathematische Zeitschrift 310~(2) (2025)
  1--41.

\bibitem{cortes2020system}
E.~Cort{\'e}s, M.~Escobedo, On a system of equations for the normal
  fluid-condensate interaction in a bose gas, Journal of Functional Analysis
  278~(2) (2020) 108315.

\bibitem{EPV}
M.~Escobedo, F.~Pezzotti, M.~Valle, Analytical approach to relaxation dynamics
  of condensed bose gases, Annals of Physics 326~(4) (2011) 808--827.

\bibitem{escobedo2023linearized1}
M.~Escobedo, On the linearized system of equations for the condensate--normal
  fluid interaction at very low temperature, Studies in Applied Mathematics
  150~(2) (2023) 448--456.

\bibitem{escobedo2023linearized}
M.~Escobedo, On the linearized system of equations for the condensate-normal
  fluid interaction near the critical temperature, Archive for Rational
  Mechanics and Analysis 247~(5) (2023) 92.

\bibitem{ToanBinh}
T.~T. Nguyen, M.-B. Tran, Uniform in time lower bound for solutions to a
  quantum boltzmann equation of bosons, Archive for Rational Mechanics and
  Analysis 231 (2019) 63--89.

\bibitem{soffer2018dynamics}
A.~Soffer, M.-B. Tran, On the dynamics of finite temperature trapped bose
  gases, Advances in Mathematics 325 (2018) 533--607.

\bibitem{walton2023numerical}
S.~Walton, M.-B. Tran, A numerical scheme for wave turbulence: 3-wave kinetic
  equations, SIAM Journal on Scientific Computing 45~(4) (2023) B467--B492.

\bibitem{walton2024numerical}
S.~Walton, M.-B. Tran, Numerical schemes for 3-wave kinetic equations: A
  complete treatment of the collision operator, arXiv preprint arXiv:2402.17481
  (2024).

\bibitem{degond2016numerical}
P.~Degond, M.~Engel, Numerical approximation of a coagulation-fragmentation
  model for animal group size statistics, arXiv preprint arXiv:1604.06500
  (2016).

\bibitem{filbet2004numerical}
F.~Filbet, P.~Lauren{\c{c}}ot, Numerical simulation of the smoluchowski
  coagulation equation, SIAM Journal on Scientific Computing 25~(6) (2004)
  2004--2028.

\bibitem{das2024approximate}
A.~Das, J.~Saha, Approximate solutions to the nonlinear hyperbolic population
  balance equation: convergence, error estimate and numerical simulations,
  Zeitschrift f{\"u}r angewandte Mathematik und Physik 75~(4) (2024) 125.

\bibitem{das2024improved}
A.~Das, P.~Kushwah, J.~Saha, M.~Singh, Improved higher-order finite volume
  scheme and its convergence analysis for collisional breakage equation,
  Applied Numerical Mathematics 196 (2024) 118--132.

\bibitem{das2023development}
A.~Das, J.~Paul, S.~Heinrich, J.~Kumar, Development and analysis of moments
  preserving finite volume schemes for multi-variate nonlinear breakage model,
  Proceedings of the Royal Society A 479~(2271) (2023) 20220658.

\bibitem{bell2017self}
N.~Bell, V.~Grebenev, S.~Medvedev, S.~Nazarenko, Self-similar evolution of
  alfven wave turbulence, Journal of Physics A: Mathematical and Theoretical
  50~(43) (2017) 435501.

\bibitem{semisalov2021numerical}
B.~Semisalov, V.~Grebenev, S.~B. Medvedev, S.~Nazarenko, Numerical analysis of
  a self-similar turbulent flow in bose--einstein condensates, Communications
  in Nonlinear Science and Numerical Simulation 102 (2021) 105903.

\bibitem{hundsdorfer2013numerical}
W.~Hundsdorfer, J.~G. Verwer, Numerical solution of time-dependent
  advection-diffusion-reaction equations, Vol.~33, Springer Science \& Business
  Media, 2013.

\bibitem{linz1975convergence}
P.~Linz, Convergence of a discretization method for integro-differential
  equations, Numerische Mathematik 25 (1975) 103--107.

\end{thebibliography}
\end{document}